\documentclass[12pt]{amsart}
\usepackage{amssymb, latexsym, amsmath, amsfonts, hyperref, enumitem}

\newtheorem{thm}{Theorem}[section]

\newtheorem{lem}[thm]{Lemma}
\newtheorem{prop}[thm]{Proposition}
\theoremstyle{definition}

\theoremstyle{remark}
\newtheorem{rem}[thm]{Remark}
\numberwithin{equation}{section}

\setlength{\oddsidemargin}{0in} \setlength{\evensidemargin}{0in}
\setlength{\textwidth}{6.3in} \setlength{\topmargin}{-0.2in}
\setlength{\textheight}{9in}

\newcommand{\mbb}{\mathbb}
\newcommand{\de}{\delta}
\newcommand{\ga}{\gamma}
\newcommand{\ra}{\rightarrow}
\newcommand{\si}{\sigma}

\newcommand{\pa}{\partial}
\newcommand{\ov}{\overline}
\newcommand{\sm}{\setminus}
\newcommand{\ep}{\epsilon}
\newcommand{\no}{\noindent}
\newcommand{\al}{\alpha}
\newcommand{\Om}{\Omega}
\newcommand{\om}{\omega}
\newcommand{\cal}{\mathcal}
\newcommand{\ti}{\tilde}
\newcommand{\la}{\lambda}

\begin{document}
\title{Dynamical properties of families of holomorphic mappings}
\keywords{}
\thanks{The first named author was supported by CSIR-UGC(India) fellowship}
\thanks{The second named author was supported by the DST SwarnaJayanti Fellowship 2009--2010 and a UGC--CAS Grant}
\author{Ratna Pal and Kaushal Verma}

\address{Ratna Pal: Department of Mathematics, Indian Institute of Science, Bangalore 560 012, India}
\email{ratna10@math.iisc.ernet.in}

\address{Kaushal Verma: Department of Mathematics, Indian Institute of Science, Bangalore 560 012, India}
\email{kverma@math.iisc.ernet.in}

\pagestyle{headings}

\begin{abstract} We study some dynamical properties of skew products of H\'{e}non maps of $\mbb C^2$ that are fibered over a compact metric space $M$. The problem reduces to
understanding the dynamical behavior of the composition of a pseudo-random sequence of H\'{e}non mappings. In analogy with the dynamics of the iterates of a single H\'{e}non map, it is
possible to construct fibered Green's functions that satisfy suitable invariance properties and the corresponding stable and unstable currents. This analogy is carried forth in two
ways: it is shown that the successive pullbacks of a suitable current by the skew H\'{e}non maps converges to a multiple of the fibered stable current and secondly, this
convergence result is used to obtain a lower bound on the topological entropy of the skew product in some special cases. The other class of maps that are studied are skew products of
holomorphic endomorphisms of $\mbb P^k$ that are again fibered over a compact base. We define the fibered basins of attraction and show that they are pseudoconvex and Kobayashi hyperbolic.

\end{abstract}

\maketitle


\section{Introduction}

\no The purpose of this note is to study various dynamical properties of some classes of fibered mappings. We will first consider families of the form $H : M \times
\mbb C^2 \ra M \times \mbb C^2$ defined by
\begin{equation}
H(\la, x, y) = (\sigma(\la), H_{\la}(x, y))
\end{equation}
where $M$ is an appropriate parameter space, $\sigma$ is a self map of $M$ and for each $\la \in M$, the map
\[
H_{\la}(x, y) = H_{\la}^{(m)} \circ H_{\la}^{(m-1)} \circ \ldots \circ H_{\la}^{(1)}(x, y)
\]
where for every $1 \le j \le m$,
\[
H_{\la}^{(j)}(x, y) = (y, p_{j, \la}(y) - a_{j}(\la) x)
\]
is a generalized H\'{e}non map with $p_{j, \la}(y)$ a monic polynomial of degree $d_j \ge 2$ whose coefficients and $a_{j}(\la)$ are functions on $M$. The degree of $H_{\la}$ is $d =
d_1d_2 \ldots d_m$ which does not vary with $\la$. The two cases that will be considered here are as follows. First, $M$ is a compact metric space
and $\sigma$, $a_j$ and the coefficients of $p_{j, \la}$ are continuous functions on $M$ and second, $M \subset \mbb C^k$, $k \ge 1$ is open in which case $\sigma$,
$a_j$ and the coefficients of $p_{j, \la}$ are assumed to be holomorphic in $\la$. In both cases, $a_j$ is assumed to be a non-vanishing function on $M$. We are interested
in studying the ergodic properties of such a family of mappings. Part of the reason for this choice stems from the
Fornaess-Wu classification (\cite{FW}) of polynomial automorphisms of $\mbb C^3$ of degree at most $2$ according to which any such map is affinely conjugate to

\begin{enumerate}
\item[(a)] an affine automorphism,
\item[(b)] an elementary polynomial automorphism of the form
\[
E(x, y, z) = (P(y, z) + ax, Q(z) + by, cz + d)
\]
where $P, Q$ are polynomials with $\max \{\deg (P), \deg (Q) \} = 2$ and $abc \not= 0$, or
\item[(c)] to one of the following:
\begin{itemize}
\item $H_1(x, y, z) = (P(x, z) + ay, Q(z) + x, cz + d)$
\item $H_2(x, y, z) = (P(y, z) + ax, Q(y) + bz, y)$
\item $H_3(x, y, z) = (P(x, z) + ay, Q(x) + z, x)$
\item $H_4(x, y, z) = (P(x, y) + az, Q(y) + x, y)$
\item $H_5(x, y, z) = (P(x, y) + az, Q(x) + by, x)$
\end{itemize}
where $P, Q$ are polynomials with $\max \{ \deg(P), \deg(Q) \} = 2$ and $abc \not= 0$.
\end{enumerate}

\no The six classes in (b) and (c) put together were studied in \cite{CF} and \cite{CG} where suitable Green functions and associated invariant measures were constructed for them. As
observed in \cite{FW}, several maps in (c) are in fact families of H\'{e}non maps for special values of the parameters $a, b, c$ and for judicious choices of the
polynomials $P, Q$. For instance, if $Q(z) = 0$ and $P(x, z) = x^2 + \ldots$, then $H_1(x, y, z) = (P(x, z) + ay, x, z)$ which is conjugate to
\[
(x, y, z) \mapsto (y, P(y, z) + ax, cz + d) = (y, y^2 + \ldots + ax, cz + d)
\]
by the inversion $\tau_1(x, y, z) = (y, x, z)$. Here $\si(z) = cz + d$. Similarly, if $a = 1, P(y, z) = 0$ and $Q$ is a quadratic polynomial, then $H_2(x, y, z) = (x, Q(y) + bz, y)$
which is conjugate to
\[
(x, y, z) \mapsto (x, z, Q(z) + by) = (x, z, z^2 + \ldots + by)
\]
by the inversion $\tau_3(x, y, z) = (x, z, y)$. Here $\si(x) = x$ and finally, if $b = 1, Q(x) = 0$ and $P(x, y) = x^2 + \ldots$, then $H_5(x, y, z) = (P(x, y) + az, y, x)$ which is
conjugate to
\[
(x, y, z) \mapsto (z, y, P(z, y) + ax) = (z, y, z^2 + \ldots + ax)
\]
by the inversion $\tau_5(x, y, z) = (z, y, x)$ where again $\si(y) = y$. All of these are examples of the kind described in $(1.1)$ with $M = \mbb C$. In the first example, if
$c \not= 1$ then an affine change of coordinates involving only the $z$-variable can make $d = 0$ and if further $\vert c \vert \le 1$, then we may take a closed disc around the origin
in $\mbb C$ which
will be preserved by $\sigma(z) = cz$. This provides an example of a H\'{e}non family that is fibered over a compact base $M$. Further, since the parameter mapping $\sigma$ in the last
two examples is just the identity, we may restrict it to a closed ball to obtain more examples of the case when $M$ is compact.

\medskip

The maps considered in $(1.1)$ are in general $q$-regular, for some $q \ge 1$, in the sense of Guedj--Sibony (\cite{GS}) as the following example shows. Let $\mathcal H : \mbb C^3 \ra
\mbb C^3$ be given by
\[
\mathcal H(\la, x, y) = (\la, y, y^2 - ax), a \not= 0
\]
which in homogeneous coordinates becomes
\[
\mathcal H([\la : x : y : t]) = [\la t : yt : y^2 - axt : t^2].
\]
The indeterminacy set of this map is $I^+ = [\la : x : 0 : 0]$ while that for $\mathcal H^{-1}$ is $I^{-1} = [\la : 0 : y : 0]$. Thus $I^+ \cap I^- = [1 : 0 : 0 : 0]$ and it can be
checked that $X^+ = \ov{\mathcal H \big( (t = 0) \sm I^+ \big)} = [0: 0: 1: 0]$ which is disjoint from $I^+$. Also, $X^- = \ov {\mathcal H^- \big( (t = 0) \sm I^- \big)} =
[0:1:0:0]$ which is disjoint from $I^-$. All these observations imply that $\mathcal H$ is $1$-regular in the sense of \cite{GS}. Further, $\deg(\mathcal H)
= \deg(\mathcal H^{-1}) = 2$. This global view point does have several advantages as the results in \cite{GS}, \cite{G} show. However, thinking of $(1.1)$ as a family of maps was
seconded by the hope that the methods of Bedford--Smillie (\cite{BS1}, \cite{BS2} and
\cite{BS3}) and Fornaess--Sibony \cite{FS} that were developed to handle the case of a single generalized H\'{e}non map would be amenable to this situation -- in fact, they are to a
large extent. Finally, in view of the systematic treatment of families of rational maps of the sphere by Jonsson (see \cite{JM}, \cite{J}), considering families of 
H\'{e}non maps appeared to be a natural next choice. Several pertinent remarks about the family $H$ in (1.1) with $\sigma(\lambda)=\lambda$ can be found in \cite{DS}.

\medskip

Let us first work with the case when $M$ is a compact metric space. For $n \ge 0$, let
\[
H_{\la}^{\pm n} = H_{\si^{n-1}(\la)}^{\pm 1} \circ \cdots \circ H_{\si(\la)}^{\pm 1} \circ H_{\la}^{\pm 1}.
\]
Note that $H_{\la}^{+n}$ is the second coordinate of the $n$-fold iterate of $H(\la, x, y)$. Furthermore
\[
(H_{\la}^{+n})^{-1} = H_{\la}^{-1} \circ H_{\si(\la)}^{-1} \circ \cdots \circ H_{\si^{n-1}(\la)}^{-1} \not= H_{\la}^{-n}
\]
and
\[
(H_{\la}^{-n})^{-1} = H_{\la} \circ H_{\si(\la)} \circ \cdots \circ H_{\si^{n-1}(\la)} \not= H_{\la}^{+n}
\]
for $n \ge 2$. The presence of $\si$ creates an asymmetry which is absent in the case of a single H\'{e}non map and which requires the consideration of these maps as will be seen
later. In what follows, no conditions on $\si$ except continuity are assumed unless stated otherwise.

\medskip

The first thing to do is to construct invariant measures for the family $H(\la, x, y)$ that respect the action of
$\sigma$. The essential step toward this is to construct a uniform filtration $V_R$, $V_R^{\pm}$ for the maps $H_\lambda$ where $R>0$ is sufficiently large.

\medskip

For each $\lambda \in M$, the sets $I_\lambda^{\pm}$ of escaping points and the sets $K_\lambda^{\pm}$ of non-escaping points under random iteration determined by $\sigma$ on $M$ are defined as follows:
\[
I_\lambda^{\pm}=\{z\in \mathbb{C}^2: \Vert H_{\la}^{\pm n}(x, y) \Vert \rightarrow \infty \; \text{as} \;
n\rightarrow \infty \},
\]
\[
K_\lambda^{\pm}=\{z\in \mathbb{C}^2: \; \text{the sequence}\;  \{ H_{\la}^{\pm n} (x, y)\}_n \; \text{is bounded}\}
\]
Clearly, $H_\lambda^{\pm 1}(K_\lambda^{\pm})= K_{\sigma(\lambda)}^{\pm}$ and
$H_\lambda^{\pm 1}(I_\lambda^{\pm})= I_{\sigma(\lambda)}^{\pm}$. Define $K_{\la} = K_{\la}^+ \cap
K_{\la}^-, J_{\la}^{\pm} = \pa K_{\la}^{\pm}$ and $J_{\la} = J_{\la}^+ \cap J_{\la}^-$. For each $\la \in M$ and $n \ge 1$, let
\[
G_{n, \la}^{\pm}(x, y) = \frac{1}{d^n} \log^+ \Vert H_{\la}^{\pm n}(x, y) \Vert
\]
where $\log^+ t=\max \{\log t,0\}$.

\begin{prop}\label{pr1}
The sequence $G_{n, \la}^{\pm}$ converges uniformly on compact subsets of $M \times \mbb C^2$ to a continuous function $G_{\la}^{\pm}$ as $n \ra \infty$ that satisfies
\[
d^{\pm 1} G_{\la}^{\pm} = G_{\si(\la)}^{\pm} \circ H_{\la}^{\pm 1}
\]
on $\mbb C^2$. The functions $G_{\la}^{\pm}$ are positive pluriharmonic on $\mbb C^2 \setminus K_{\la}^{\pm}$, plurisubharmonic on $\mbb C^2$ and vanish precisely on $K_{\la}^{\pm}$.
The correspondence $\lambda \mapsto G_\lambda^{\pm}$ is continuous. In case $\si$ is surjective, $G_{\la}^+$ is locally uniformly H\"{o}lder continuous, i.e., for each compact $S
\subset
\mbb C^2$, there exist constants $\tau, C > 0$ such that
\[
\big\vert G_{\la}^+(x, y) - G_{\la}^+(x', y') \big\vert \le C \Vert (x, y) - (x', y') \Vert^{\tau}
\]
for all $(x, y), (x', y') \in S$. The constants $\tau, C$ depend on $S$ and the map $H$ only.
\end{prop}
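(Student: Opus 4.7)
The first step is to construct a \emph{uniform filtration}. By compactness of $M$ and continuity of the coefficients of $p_{j,\la}$ and $a_j$, for $R>0$ sufficiently large and independent of $\la$ the sets
\[
V_R = \{\abs{x},\abs{y}\le R\}, \quad V_R^+ = \{\abs{y}>R,\ \abs{y}\ge\abs{x}\}, \quad V_R^- = \{\abs{x}>R,\ \abs{x}\ge\abs{y}\}
\]
satisfy $H_\la^{+1}(V_R^+)\subset V_R^+$, $H_\la^{-1}(V_R^-)\subset V_R^-$, and the standard H\'enon growth estimate $\norm{H_\la^{+1}(x,y)}\ge c\abs{y}^d$ on $V_R^+$ (and symmetrically on $V_R^-$), with $c>0$ independent of $\la$. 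This is a routine computation on each elementary factor $H_\la^{(j)}$, made uniform by the sup-norm bounds on the coefficients afforded by compactness of $M$.

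Granted the filtration, on $V_R^+$ the quantity $\norm{H_\la^{+(n+1)}(z)}/\norm{H_\la^{+n}(z)}^d$ lies between two positive constants independent of $\la$, so the sequence $\{d^{-n}\log\norm{H_\la^{+n}(z)}\}_n$ is Cauchy at rate $O(1/d^n)$, uniformly in $\la$ and in $z$ on compact subsets of $V_R^+$. For a general compact $K\subset\mbb C^2$ one splits according to escape behaviour: for $z\in K_\la^+$ the orbit stays bounded so $G_{n,\la}^+(z)\to 0$, while on the escaping part the orbit enters $V_R^+$ after finitely many steps, with the early iterates controlled crudely by $\norm{H_\la^{+n}(z)}\le C_n\norm{z}^{d^n}$ for $C_n$ uniform in $\la$. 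Passing to the limit yields a continuous $G_\la^+$; the functional equation $dG_\la^+ = G_{\si(\la)}^+\circ H_\la^{+1}$ is obtained by letting $n\to\infty$ in the corresponding identity for $G_{n+1,\la}^+$. Joint continuity of $(\la,z)\mapsto G_\la^+(z)$, and hence continuity of $\la\mapsto G_\la^+$ in the compact-open topology, transfers from the $G_{n,\la}^+$ through the uniform convergence. Plurisubharmonicity in $z$ is inherited from the $G_{n,\la}^+$, each of which is $\log^+$ of the modulus of a holomorphic map. Pluriharmonicity off $K_\la^+$ follows because on $V_R^+$ the $y$-component of $H_\la^{+n}(z)$ dominates the $x$-component and is a nonvanishing polynomial in $z$, so $G_\la^+$ is the uniform limit there of the pluriharmonic functions $d^{-n}\log\abs{(\text{$y$-component of }H_\la^{+n})(z)}$; the functional equation then propagates pluriharmonicity to all of $\mbb C^2\sm K_\la^+$. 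Vanishing on $K_\la^+$ is immediate from the filtration, and the argument for $G_\la^-$ is entirely symmetric.

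The delicate part is the H\"older estimate when $\si$ is surjective. The plan is to iterate the functional equation,
\[
G_\la^+(z) = \frac{1}{d^n}\, G_{\si^n(\la)}^+\bigl( H_\la^{+n}(z) \bigr),
\]
in the classical Bedford--Smillie / Fornaess--Sibony style. Given a compact $S\subset\mbb C^2$ and nearby $z_1,z_2\in S$, fix a larger compact $S'\supset S$ and use a uniform-in-$\la$ Lipschitz bound $\norm{H_\la^{+n}(z_1)-H_\la^{+n}(z_2)}\le A^n\norm{z_1-z_2}$, valid so long as all intermediate iterates $H_\la^{+k}(z_i)$ remain in $S'$; choose $n$ maximal with this property, which forces $n\gtrsim\log(1/\norm{z_1-z_2})$. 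Continuity of $(\la,z)\mapsto G_\la^+(z)$ together with compactness of $M$ yields a uniform bound $\abs{G_\mu^+}\le C$ on $S'$ for every $\mu\in M$, and substituting into the iterated functional equation gives
\[
\abs{G_\la^+(z_1)-G_\la^+(z_2)} \le \frac{2C}{d^n} \le C'\norm{z_1-z_2}^\tau
\]
with $\tau = \log d/\log A$. Surjectivity of $\si$ is used to guarantee that the parameters $\si^n(\la)$ really do range through $M$, so that the uniform bounds of the previous step are in force at every iteration. The main obstacle throughout is the uniform-in-$\la$ character of these estimates: all the classical single-H\'enon computations have to be redone so that every constant depends only on sup-norm bounds of the coefficients over the compact base $M$, and the base shift by $\si$ must be tracked carefully at each application of the functional equation.
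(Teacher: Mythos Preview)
Your treatment of existence, convergence, the functional equation and the qualitative properties of $G_\la^\pm$ follows the paper's line closely and is fine at this level of detail (the paper's uniform-in-$\la$ convergence is obtained via a maximum-principle argument on vertical slices rather than your escape-dichotomy, but that is a matter of presentation).

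The H\"older argument, however, has a genuine gap, and your explanation of the role of surjectivity is incorrect. You choose $n$ maximal so that all intermediate iterates $H_\la^{+k}(z_i)$ remain in $S'$ and assert that this forces $n\gtrsim\log(1/\norm{z_1-z_2})$. It does not: the exit time from $S'$ is governed by how far $z_1,z_2$ are from $K_\la^+$, not by how close they are to each other. If both points lie well inside the escaping set (large $G_\la^+$), their orbits may leave $S'$ after one or two steps even when $\norm{z_1-z_2}$ is tiny, and then $2C/d^n$ gives no smallness at all. Your remark that surjectivity is needed so that ``the parameters $\si^n(\la)$ really do range through $M$'' is a non sequitur: $\si$ is a self-map of $M$, so $\si^n(\la)\in M$ automatically, and the bound $\sup_{\mu\in M}\sup_{S'}\abs{G_\mu^+}\le C$ is already available from compactness of $M$ and joint continuity.

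The paper handles this differently. It bounds the gradient of $(G_{\la}^+)^\ga$ for a suitable $\ga>1$ (so that $(G_\la^+)^\ga$ is Lipschitz, hence $G_\la^+$ is H\"older with exponent $1/\ga$). For a point with $G_{\la}^+\sim d^{-N}\de$ close to $K_\la^+$, one iterates \emph{forward} $N$ times into the strip $\{\de<G_{\si^N(\la)}^+\le d\de\}$ where the gradient of $G_{\si^N(\la)}^+$ is uniformly controlled; the chain rule and the bound $\norm{DH_\la^{+N}}\le A^N$ on $V_R$ then yield the estimate. For a point with $G_{\la}^+\sim d^{N}\de$ far from $K_\la^+$, one must iterate \emph{backward} into the same strip, and it is precisely here that surjectivity enters: one needs $\mu$ with $\si^N(\mu)=\la$ in order to write $d^{-N}G_\la^+=G_\mu^+\circ(H_\mu^{+N})^{-1}$ and run the same derivative estimate. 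Without this backward step there is no uniform control on $\nabla G_\la^+$ at points of $S$ that escape quickly, and your forward-only scheme cannot supply it.
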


\no As a result, $\mu_{\la}^{\pm} = (1/2\pi) dd^c G_{\la}^{\pm}$ are well defined positive closed $(1, 1)$ currents on $\mbb C^2$ and hence $\mu_{\la} = \mu_{\la}^+ \wedge \mu_{\la}^-$
defines a probability measure on $\mbb C^2$ whose support is contained in $V_R$ for every $\la \in M$. Moreover the correspondence $\lambda\mapsto \mu_\lambda$ is continuous. That these
objects are well behaved under the pullback and
push forward operations by $H_{\la}$ and at the same time respect the action of $\si$ is recorded in the following:

\begin{prop}\label{pr2}
With $\mu_{\la}^{\pm}, \mu_{\la}$ as above, we have
\[
{(H_{\la}^{\pm 1})}^{\ast} \mu_{\si(\la)}^{\pm} = d^{\pm 1} \mu_{\la}^{\pm}, \; (H_{\la})_{\ast} \mu_{\la}^{\pm} = d^{\mp 1} \mu_{\si(\la)}^{\pm}
\]
The support of $\mu^{\pm}_{\la}$ equals $J_{\la}^{\pm}$ and the correspondence $\la \mapsto J_{\la}^{\pm}$ is lower semi-continuous. Furthermore, for each $\lambda\in
M$, the pluricomplex Green function of $K_\lambda$ is $\max\{G_\lambda^+, G_\lambda^-\}$, $\mu_\lambda$ is the complex
equilibrium measure of $K_\lambda$ and ${\rm supp}(\mu_\lambda)\subseteq J_\lambda$.

\medskip

In particular, if $\si$ is the identity on $M$, then $(H_{\la}^{\pm 1})^{\ast} \mu_{\la} = \mu_{\la}$.
\end{prop}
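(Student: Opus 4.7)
The plan is to derive all assertions from the functional equation $d^{\pm 1} G_\la^\pm = G_{\si(\la)}^\pm \circ H_\la^{\pm 1}$ established in Proposition \ref{pr1}. Applying $(1/2\pi) dd^c$ to both sides and using the commutation of $dd^c$ with pullback by a holomorphic map yields the pullback identity $(H_\la^{\pm 1})^\ast \mu_{\si(\la)}^\pm = d^{\pm 1} \mu_\la^\pm$ at once. The pushforward identity $(H_\la)_\ast \mu_\la^\pm = d^{\mp 1} \mu_{\si(\la)}^\pm$ is then obtained by interpreting pushforward at the level of potentials via $(H_\la)_\ast (dd^c u) = dd^c (u \circ H_\la^{-1})$ (valid since $H_\la$ is a biholomorphism of $\mbb C^2$) together with the functional equation, the reciprocal degree factor arising from the inversion.

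For the support statement, $\mu_\la^\pm = (1/2\pi) dd^c G_\la^\pm$ is supported precisely where the plurisubharmonic function $G_\la^\pm$ fails to be pluriharmonic. Since by Proposition \ref{pr1} the function $G_\la^\pm$ is pluriharmonic on $\mbb C^2 \sm K_\la^\pm$ and vanishes identically on $K_\la^\pm$, the support is contained in $\pa K_\la^\pm = J_\la^\pm$. Conversely, pluriharmonicity on any neighborhood of a point $p \in J_\la^\pm$ would contradict the coexistence in that neighborhood of the vanishing set $K_\la^\pm$ and the strict positivity set $\mbb C^2 \sm K_\la^\pm$ via the minimum principle. Lower semi-continuity of $\la \mapsto J_\la^\pm$ is a consequence of the continuous dependence $\la \mapsto G_\la^\pm$ already recorded in Proposition \ref{pr1}: given $p \in J_\la^\pm$ and an open neighborhood $U$ of $p$, $U$ contains both a point where $G_\la^\pm$ is strictly positive and a point where it vanishes, and continuity in $\la$ preserves both features for nearby $\la'$, forcing $J_{\la'}^\pm \cap U \neq \emptyset$.

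For the identification with the pluricomplex Green function of $K_\la$, one verifies that $\max\{G_\la^+, G_\la^-\}$ is plurisubharmonic, vanishes exactly on $K_\la^+ \cap K_\la^- = K_\la$, and lies in the Lelong class with growth $\log \norm{(x,y)} + O(1)$ at infinity (coming from the uniform filtration estimates behind Proposition \ref{pr1}); a standard Siciak--Zakharyuta comparison then identifies it as the pluricomplex Green function of $K_\la$. The equilibrium measure of $K_\la$ is by definition $(1/(2\pi))^2 (dd^c \max\{G_\la^+, G_\la^-\})^2$, and a Bedford--Taylor computation exploiting $(dd^c G_\la^\pm)^2 = 0$ off $K_\la^\pm$ (since $G_\la^\pm$ is pluriharmonic there) reduces this Monge--Amp\`ere mass to the mixed wedge $\mu_\la^+ \wedge \mu_\la^- = \mu_\la$. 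This Bedford--Taylor reduction is the main technical step and relies on the continuity of the $G_\la^\pm$ from Proposition \ref{pr1} to legitimize the wedge. The containment $\mathrm{supp}(\mu_\la) \subseteq J_\la$ is then immediate from $\mathrm{supp}(\mu_\la^\pm) = J_\la^\pm$ and the basic inclusion of the support of a wedge in the intersection of supports. Finally, when $\si$ is the identity the pullback identities collapse to $(H_\la^{\pm 1})^\ast \mu_\la^\pm = d^{\pm 1} \mu_\la^\pm$, and wedging the $+$ and $-$ versions combines the degree factors so that they cancel, yielding $(H_\la^{\pm 1})^\ast \mu_\la = \mu_\la$.
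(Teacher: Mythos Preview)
Your overall plan matches the paper's proof closely: the pullback/pushforward identities via the functional equation for $G_\la^\pm$, the support argument via the minimum principle, the invariance of $\mu_\la$ when $\si = \mathrm{id}$ by wedging, and the identification of the equilibrium measure via a Bedford--Taylor reduction are all carried out in the paper along the lines you indicate.

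There is, however, a genuine gap in your argument for the lower semi-continuity of $\la \mapsto J_\la^\pm$. You assert that if $U$ is a neighborhood of $p \in J_{\la_0}^\pm$ containing a point $q$ with $G_{\la_0}^\pm(q) > 0$ and a point $r$ with $G_{\la_0}^\pm(r) = 0$, then ``continuity in $\la$ preserves both features for nearby $\la'$''. Continuity does preserve strict positivity at $q$, but it does \emph{not} preserve vanishing at $r$: from $G_{\la_0}^\pm(r) = 0$ and $|G_{\la'}^\pm(r) - G_{\la_0}^\pm(r)|$ small you only get $G_{\la'}^\pm(r)$ small, not zero. It is therefore entirely possible that $G_{\la'}^\pm > 0$ throughout $U$, in which case $U \subset \mbb C^2 \setminus K_{\la'}^\pm$ and $J_{\la'}^\pm \cap U = \emptyset$. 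The paper closes this gap by passing through the continuity of the \emph{currents} $\la \mapsto \mu_\la^\pm$ rather than of the potentials: since $p \in \mathrm{supp}(\mu_{\la_0}^\pm)$, one can test $\mu_{\la_0}^\pm$ against a nonnegative form supported in $U$ to get a strictly positive value, and weak continuity then forces $\mathrm{supp}(\mu_{\la'}^\pm) \cap U \neq \emptyset$ for $\la'$ near $\la_0$.

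A smaller point: your ``standard Siciak--Zakharyuta comparison'' only gives the easy inequality $\max\{G_\la^+, G_\la^-\} \le L_{K_\la}$. The reverse inequality is the substantive direction; the paper obtains it by first proving $G_\la^\pm = L_{K_\la^\pm}$ (via a one-variable maximum principle on vertical lines) and then invoking a domination principle using that $(dd^c u_\la)^2$ is supported on $K_\la$. Your sketch should at least indicate which mechanism produces the extremality direction.
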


\no Let $T$ be a positive closed $(1, 1)$ current in a domain $\Om \subset \mbb C^2$ and let $\psi \in C^{\infty}_0(\Om)$ with $\psi \ge 0$ be such that $\text{supp}(\psi) \cap
\text{supp}(dT) = \phi$. Theorem 1.6 in \cite{BS3} shows that for a single H\'{e}non map $H$ of degree $d$, the sequence $d^{-n} H^{n \ast}(\psi T)$ always converges to $c \mu^+$
where $c = \int \psi T \wedge \mu^- > 0$. In the same vein, for each $\la \in M$ let $S_{\la}(\psi, T)$ be the set of all possible limit points of the sequence
$d^{-n}\big( H_{\la}^{+n}\big)^{\ast}(\psi T)$.

\begin{thm}\label{thm1}
$S_{\la}(\psi, T)$ is nonempty for each $\la \in M$ and $T, \psi$ as above. Each $\ga_{\la} \in S_{\la}(\psi, T)$ is a positive multiple of $\mu_{\la}^+$.
\end{thm}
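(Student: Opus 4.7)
The plan is to adapt the argument of \cite{BS3}, Theorem~1.6 -- which establishes the analogous equidistribution statement for a single H\'enon map -- to the fibered setting by systematically replacing the classical objects $G^{\pm}, \mu^{\pm}$ with the fibered ones $G_\la^{\pm}, \mu_\la^{\pm}$ from Propositions \ref{pr1} and \ref{pr2}. The first task is a uniform mass bound: for any compact $K\subset\mbb C^2$,
\[
\sup_n \big\Vert d^{-n}(H_\la^{+n})^{\ast}(\psi T)\big\Vert_K < \infty.
\]
This is extracted from the uniform filtration $V_R, V_R^{\pm}$ for the family, the invariance $H_\la(V_R^+)\subset V_{\si(\la)}^+$, and the fact that the pullback of a positive closed $(1,1)$ current by $H_\la^{+n}$ has mass growth of order $d^n$ (each factor $H_{\si^j(\la)}$ has algebraic degree $d$); equivalently, writing $T=dd^c u$ near $\text{supp}(\psi)$ and integrating by parts reduces the mass to an integral of $u$ against a form of uniformly bounded mass. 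Weak-$\ast$ compactness then yields $S_\la(\psi,T)\neq\emptyset$.

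Let $\ga_\la$ be a weak-$\ast$ limit of a subsequence $d^{-n_k}(H_\la^{+n_k})^{\ast}(\psi T)$. It is positive of bidegree $(1,1)$, and interpreting the hypothesis $\text{supp}(\psi)\cap\text{supp}(dT)=\emptyset$ as $d(\psi T)=0$, it is also closed. Its support lies in $J_\la^+$: for $z$ in a compact subset of $\mbb C^2\sm K_\la^+$ one has $H_\la^{+n}(z)\to\infty$, hence $\psi\circ H_\la^{+n}$ is eventually zero on that compact, so $\ga_\la$ is supported on $\pa K_\la^+ = J_\la^+=\text{supp}(\mu_\la^+)$. To identify $\ga_\la=c\,\mu_\la^+$, the idea is to use the local representation $T=dd^c u$ together with the convergence $d^{-n}\log\Vert H_\la^{+n}\Vert\to G_\la^+$ from Proposition \ref{pr1} to show that $d^{-n}\, u\circ H_\la^{+n}\to \al\,G_\la^+$ on $\mbb C^2\sm K_\la^+$, where $\al\ge 0$ is the logarithmic growth rate of $u$ at infinity. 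Taking $dd^c$ then gives $\ga_\la=c\,\mu_\la^+$, with $c=\int \psi\,T\wedge\mu_\la^-$ identified, as in \cite{BS3}, by a duality argument against $\mu_\la^-$.

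The main obstacle I expect is carrying the proportionality step over to the fibered framework: the identities $(H_\la^{\pm 1})^{\ast}\mu_{\si(\la)}^{\pm}=d^{\pm 1}\mu_\la^{\pm}$ and $(H_\la)_{\ast}\mu_\la^{\pm}=d^{\mp 1}\mu_{\si(\la)}^{\pm}$ from Proposition \ref{pr2} shuffle among different fibers along the pseudo-orbit $\la\mapsto\si(\la)\mapsto\cdots\mapsto\si^n(\la)$, so the slicing/duality computation against $\mu_\la^-$ that in \cite{BS3} takes place in a single fiber must here be tracked through a sliding sequence of fibers: one pushes $(H_\la^{+n})^{\ast}(\psi T)$ forward to the fiber over $\si^n(\la)$, exploits the invariance identities there, and pulls back. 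The uniform estimates on $G_\la^{\pm}$ provided by Proposition \ref{pr1} (H\"older continuity when $\si$ is surjective, uniform convergence of $G_{n,\la}^{\pm}$ to $G_\la^{\pm}$ in general) are what allow the single-map argument to close uniformly in $\la$, at the price of needing also a fibered analogue of the uniqueness of positive closed $(1,1)$ currents supported on $J_\la^+$ with prescribed potential growth.
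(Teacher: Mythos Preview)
Your proposal has a genuine gap at the closedness step. You write that ``interpreting the hypothesis $\text{supp}(\psi)\cap\text{supp}(dT)=\emptyset$ as $d(\psi T)=0$'' makes each pulled-back current closed, hence $\gamma_\lambda$ closed. But this interpretation is wrong: the hypothesis only gives $\psi\,dT=0$, so $d(\psi T)=d\psi\wedge T$, which need not vanish. Thus $d^{-n}(H_\lambda^{+n})^{\ast}(\psi T)$ is \emph{not} closed along the sequence, and closedness of the limit is not automatic. The paper deals with this by testing against an exact $1$-form $d\chi$, integrating by parts to produce $d\psi\wedge T$ on the right, and estimating via Cauchy--Schwarz that
\[
\left|\frac{1}{d^{n}}\int_{\mathbb C^2} \chi\circ(H_\lambda^{+n})^{-1}\wedge d\psi\wedge\psi_1 T\right|\lesssim d^{-n/2},
\]
so the defect of closedness dies as $n\to\infty$. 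Without this estimate your argument does not establish that $\gamma_\lambda$ is closed, and the rest of the identification collapses.

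Two further points. First, you claim the constant is $c=\int\psi\, T\wedge\mu_\lambda^-$; the paper's Theorem~\ref{thm1} does \emph{not} assert this. The identification of $c$ is deferred to Proposition~1.4 and requires $\sigma$ to be the identity or a contraction, precisely because the duality computation you sketch does not close for general $\sigma$: pushing forward lands over $\sigma^n(\lambda)$, and the relevant ``minus'' Green function one must control is $d^{-n}\log^+\Vert(H_\lambda^{+n})^{-1}\Vert$, which is \emph{not} $G_{n,\lambda}^-$ and has no reason to converge in general. Second, your proportionality mechanism --- take a local potential $u$ of $T$ and show $d^{-n}u\circ H_\lambda^{+n}\to\alpha\,G_\lambda^+$ with $\alpha$ the ``logarithmic growth of $u$ at infinity'' --- is problematic as stated, since $T$ lives on a bounded domain $\Omega$ and $u$ has no growth at infinity to speak of. The paper instead works with the \emph{limit} current: it invokes the representation $\gamma_\lambda=c_{\gamma,\lambda}\,dd^cU_{\gamma,\lambda}$ with $U_{\gamma,\lambda}\in\mathcal L_y$ (\cite{MNTU}, Prop.~8.3.6), observes that $d^N(H_\lambda^{+N})_{\ast}\gamma_\lambda\in S_{\sigma^N(\lambda)}(\psi,T)$ to get the invariance $d^{-N}(H_\lambda^{+N})^{\ast}U_{\gamma,\sigma^N(\lambda)}=U_{\gamma,\lambda}$, and then proves $U_{\gamma,\lambda}=G_\lambda^+$ separately on $\mathbb C^2\setminus K_\lambda^+$, on the interior of $K_\lambda^+$ (via a capacity/measure estimate), and finally on $J_\lambda^+$ by semicontinuity.
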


In general, $S_\lambda(\psi, T)$ may be a large set. However, there are two cases for which it is possible to
determine the cardinality of $S_{\la}(\psi, T)$ and both are illustrated by the examples mentioned earlier.

\begin{prop}
If $\sigma$ is the identity on $M$ or when $\sigma : M \ra M$ is a contraction, i.e., there exists $\la_0 \in M$ such that $\si^n(\la) \ra \la_0$ for all $\la \in M$, the set
$S_{\la}(\psi, T)$ consists of precisely one element. Consequently, in each of these cases there exists a constant $c_{\la}(\psi, T) > 0$ such that
\[
\lim_{n \ra  \infty} d^{-n} \big( H_{\la}^{+n}\big)^{\ast}(\psi T) = c_{\la}(\psi, T) \mu_{\la}^+.
\]
\end{prop}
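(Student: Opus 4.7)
By Theorem~\ref{thm1}, every accumulation point of $T_n := d^{-n}(H_\la^{+n})^\ast(\psi T)$ is a positive multiple $c\,\mu_\la^+$ of $\mu_\la^+$, so it suffices to show that the multiplier $c$ does not depend on the chosen subsequence. Since $\mu_\la = \mu_\la^+ \wedge \mu_\la^-$ is a probability measure, $\int c\,\mu_\la^+ \wedge \mu_\la^- = c$. The strategy is therefore to compute $\int T_n \wedge \mu_\la^-$ at the pre-limit level, show it has a unique limit as $n \ra \infty$, and then pass the wedge against $\mu_\la^-$ through weak limits of the $T_n$.

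For the mass computation, iterating the push-forward identity $(H_\la)_\ast \mu_\la^- = d\,\mu_{\si(\la)}^-$ from Proposition~\ref{pr2} gives $(H_\la^{+n})_\ast \mu_\la^- = d^n\,\mu_{\si^n(\la)}^-$. Since $\psi T$ is compactly supported and $H_\la^{+n}$ is proper, the standard adjoint relation between pull-back and push-forward yields
\[
\int T_n \wedge \mu_\la^- \;=\; d^{-n}\int \psi T \wedge (H_\la^{+n})_\ast \mu_\la^- \;=\; \int \psi T \wedge \mu_{\si^n(\la)}^-.
\]
When $\si = \mathrm{id}$ the right-hand side is the constant $\int \psi T \wedge \mu_\la^-$. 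When $\si^n(\la) \ra \la_0$, Proposition~\ref{pr1} gives $G_{\si^n(\la)}^- \ra G_{\la_0}^-$ uniformly on compact sets; writing $\mu_\la^- = (1/2\pi)\,dd^c G_\la^-$ and integrating by parts against the closed current $T$ then delivers $\int \psi T \wedge \mu_{\si^n(\la)}^- \ra \int \psi T \wedge \mu_{\la_0}^-$. In either case the right-hand side has a unique limit.

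Along any subsequence with $T_{n_k} \ra c\,\mu_\la^+$, one expects
\[
\int T_{n_k} \wedge \mu_\la^- \;\longrightarrow\; c\int \mu_\la^+ \wedge \mu_\la^- \;=\; c,
\]
and combining this with the mass formula forces $c = \int \psi T \wedge \mu_\la^-$ in the identity case and $c = \int \psi T \wedge \mu_{\la_0}^-$ in the contraction case; both are strictly positive by Theorem~\ref{thm1}. Uniqueness of $c$ establishes convergence of the whole sequence $T_n$ and identifies $c_\la(\psi,T)$ explicitly.

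The main technical hurdle is justifying this last passage to the limit: the $T_n$ are not closed, so Bedford--Taylor continuity of the wedge product against $\mu_\la^-$ cannot be applied to them directly. The remedy is to write $T_n = (\psi \circ H_\la^{+n})\,S_n$ with $S_n := d^{-n}(H_\la^{+n})^\ast T$ positive and closed, and to route the convergence through the closed factors $S_n$ (for which Bedford--Taylor continuity is available, using the continuity of the potential $G_\la^-$ supplied by Proposition~\ref{pr1}), together with the uniform sup-norm bound $\|\psi \circ H_\la^{+n}\|_\infty \le \|\psi\|_\infty$ and the fact that the relevant supports remain in a common compact set thanks to the compactness of $\mathrm{supp}\,\psi$ and the properness of $H_\la^{+n}$.
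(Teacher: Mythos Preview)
Your mass identity $\int T_n\wedge\mu_\la^-=\int\psi T\wedge\mu_{\si^n(\la)}^-$ is correct and, especially in the contraction case, is more economical than what the paper does: you get the limiting value directly from the continuity of $\la\mapsto G_\la^-$, with no need to build an auxiliary function. But the step you yourself single out as the main hurdle---passing the wedge with $\mu_\la^-$ through the weak limit $T_{n_k}\to c\,\mu_\la^+$---is a genuine gap, and your proposed remedy does not close it. Writing $T_n=(\psi\circ H_\la^{+n})S_n$ buys nothing: the closed factors $S_n$ are only defined on $(H_\la^{+n})^{-1}(\Om)$; nothing forces $S_{n_k}$ (or local potentials for them) to converge along the subsequence just because $T_{n_k}$ does; and the cutoffs $\psi\circ H_\la^{+n_k}$ do not converge in any usable sense. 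Bedford--Taylor continuity needs convergence of potentials of the closed currents, which you do not have. Moreover the supports of the $T_n$ do \emph{not} remain in a fixed compact set (they lie in $\ov{V_R\cup V_R^-}$, which is unbounded); it is only ${\rm supp}(T_n)\cap{\rm supp}(\mu_\la^-)$ that sits inside $\ov{V_R}$, and this by itself still does not let you push a non-closed weak limit through $\wedge\,dd^cG_\la^-$.

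The paper circumvents this by testing against a \emph{smooth} auxiliary current $\theta=\tfrac12\,dd^c\log(1+|x|^2)$ instead of $\mu_\la^-$. On one side it reads off $\int\ga_\la\wedge\theta=4\pi^2 c_{\ga,\la}$ directly from the representation $\ga_\la=c_{\ga,\la}\,dd^cU_{\ga,\la}$ with $U_{\ga,\la}\in\cal L_y$ supplied by Theorem~\ref{thm1}, using only the slice structure and the logarithmic growth of $U_{\ga,\la}$ in $y$. On the other side it pushes $\theta$ forward to obtain $\int T_n\wedge\theta=\int\psi T\wedge dd^c v_n$ with $v_n$ essentially $d^{-n}\log^+\lVert(H_\la^{+n})^{-1}\rVert$, and then proves that these \emph{potentials} converge uniformly on compacta: to $G_\la^-$ when $\si$ is the identity, and in the contraction case to a new function $\ti G_\la^-$ that has to be constructed separately because $(H_\la^{+n})^{-1}\neq H_\la^{-n}$. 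Since $\theta$ is a smooth bounded form, linking the two computations amounts to pairing a weakly convergent sequence of positive currents against a smooth form, which is considerably more tractable than wedging against the non-smooth $dd^cG_\la^-$. If you want to salvage your route, you would need an independent argument (not the $S_n$ decomposition) that $T_{n_k}\wedge\mu_\la^-\to c\,\mu_\la$ as measures; absent that, the paper's choice of a smooth test form is exactly what makes the passage to the limit go through.
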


\medskip

Let us now consider the case when $M$ is a relatively compact open subset of $\mbb C^k$, $k \ge 1$ and the map $\si$ is the identity on $M$. Since this means that the slices
over each point in $M$ are preserved, we may (by shrinking $M$ slightly) assume that the maps $H_{\la}$ are well defined in a neighborhood of $\ov M$. Thus the earlier discussion
about the construction of $\mu_{\la}^{\pm}, \mu_{\la}$ applies to the family (which will henceforth be considered)
\begin{gather}
H : M \times \mbb C^2 \ra M \times  \mbb C^2, \notag\\
H(\la, x, y) = (\la, H_{\la}(x, y)) \notag.
\end{gather}

\no For every probability measure $\mu'$ on $M$,
\begin{equation}
\langle \mu, \phi \rangle = \int_{M} \bigg( \int_{\{\la\} \times \mbb C^2} \phi \; \mu_{\la} \bigg) \mu'(\la)
\end{equation}
defines a measure on $M \times \mbb C^2$ by describing its action on continuous functions $\phi$ on $M \times \mbb C^2$. This is not a dynamically natural measure 
since $\mu'$ is arbitrary. It will turn out that the support of $\mu$ is contained in
\[
\cal J = \bigcup_{\la \in M} \left( \{ \la \} \times J_{\la} \right) \subset M \times V_R.
\]
The slice measures of $\mu$ are evidently $\mu_{\la}$ and
since $\si$ is the identity it can be seen from Proposition 1.2 that $\mu$ is an invariant probability measure for $H$ as above.

\begin{thm}
Regard $H$ as a self map of ${\rm supp}(\mu$) with invariant measure $\mu$. The measure theoretic entropy of $H$ with respect to $\mu$ is at least $\log d$. In particular, the topological
entropy of $H : \cal J \ra \cal J$ is at least $\log d$.
\end{thm}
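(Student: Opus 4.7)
The plan is to reduce the measure-theoretic entropy $h_\mu(H)$ to the entropies of the fiberwise dynamics by a skew-product entropy formula, and then invoke the classical fact that for a single composition of generalized H\'enon maps, the equilibrium measure has entropy exactly $\log d$.

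Since $\si = \mathrm{id}_M$, the projection $\pi : M \times \mbb C^2 \to M$ is a measure-theoretic factor map from $(H, \mu)$ onto $(\mathrm{id}_M, \mu')$. By the very definition (1.2), the Rokhlin disintegration of $\mu$ along the fibers of $\pi$ is the family $\{\mu_\la\}_{\la \in M}$, and on each fiber $\{\la\} \times \mbb C^2$ the map $H$ restricts to the single composition of H\'enon maps $H_\la$, which preserves $\mu_\la$ by Proposition \ref{pr2}. The Abramov--Rokhlin formula for measurable skew products then yields
\[
h_\mu(H) \;=\; h_{\mu'}(\mathrm{id}_M) + \int_M h_{\mu_\la}(H_\la)\, d\mu'(\la) \;=\; \int_M h_{\mu_\la}(H_\la)\, d\mu'(\la),
\]
because the base dynamics is trivial and contributes zero entropy.

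For each fixed $\la \in M$, Proposition \ref{pr2} identifies $\mu_\la = \mu_\la^+ \wedge \mu_\la^-$ as the complex equilibrium measure of $K_\la$, which coincides with the Bedford--Smillie measure of maximal entropy of the single H\'enon-type composition $H_\la$ (as in \cite{BS3}). Smillie's theorem asserts $h_{\mu_\la}(H_\la) = \log d$. Substituting this into the displayed formula gives $h_\mu(H) = \log d$, proving the measure-theoretic statement. The topological entropy lower bound follows from the variational principle applied on the compact, $H$-invariant set $\mathrm{supp}(\mu) \subseteq \cal J$: one has $h_{\mathrm{top}}(H|_{\cal J}) \geq h_{\mathrm{top}}(H|_{\mathrm{supp}(\mu)}) \geq h_\mu(H) \geq \log d$.

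The principal technical obstacle is justifying the Abramov--Rokhlin decomposition in this generality: one needs the partition of $\mathrm{supp}(\mu)$ into fibers of $\pi$ to be measurable of Rokhlin type, and the function $\la \mapsto h_{\mu_\la}(H_\la)$ to be $\mu'$-measurable. The former holds because $M$ is a Polish space, $\pi$ is continuous, and $\la \mapsto \mu_\la$ is continuous by Proposition \ref{pr1}, so the disintegration supplied by (1.2) is indeed the canonical one; the latter is automatic since the fiber entropy is constantly $\log d$, requiring no ergodicity assumption on $\mu'$. A secondary, cosmetic issue is that $\cal J$ need not be closed, but this is circumvented either by passing to $\overline{\mathrm{supp}(\mu)}$ or by using Bowen's definition of topological entropy, both of which leave the final inequality intact.
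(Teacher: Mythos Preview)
Your argument is correct and takes a genuinely different route from the paper's. The paper does not invoke any skew-product entropy formula; instead it re-runs the Bedford--Smillie entropy argument in the fibered setting. Concretely, it fixes a disc $\mathcal D \subset V_R$, forms the fibered approximating measures $\sigma_{n,\la} = d^{-n}\alpha_{n,\la}$ and $\mu_{n,\la} = \tfrac{1}{n}\sum_{j=0}^{n-1}(H_\la^j)_\ast \sigma_{n,\la}$, integrates them over $\mu'$ to get $\sigma_n$ and $\mu_n$ on $M\times V_R$, and then uses Theorem~1.3 and Proposition~1.4 (which were proved earlier in the paper precisely for this purpose) to show $\mu_n \to \mu$. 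The lower bound $h_\mu(H)\ge \log d$ then comes from Misiurewicz's variational argument combined with an area-growth estimate and Yomdin's theorem, exactly as in \cite{BS3}. In short, the paper re-proves the fiberwise entropy bound from scratch in the product setting, while you import the fiberwise result $h_{\mu_\la}(H_\la)=\log d$ from \cite{BS3} as a black box and assemble it via entropy affinity.

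Two remarks on what each approach buys. Your route is shorter and in fact yields the equality $h_\mu(H)=\log d$, not just the inequality; it also makes transparent that Theorem~1.3 and Proposition~1.4 are not actually needed for the entropy statement once one is willing to quote \cite{BS3}. The paper's route is more self-contained and shows how the convergence-of-currents machinery developed earlier feeds directly into the entropy estimate. A small terminological point: since $\sigma=\mathrm{id}$, the fiber measures $\delta_\la\otimes\mu_\la$ are $H$-ergodic (each $\mu_\la$ is mixing for $H_\la$ by \cite{BS3}), so your decomposition is literally the ergodic decomposition of $\mu$ and the formula you need is Jacobs' theorem on affinity of entropy rather than Abramov--Rokhlin proper; this sidesteps any hypothesis on the base dynamics.
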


It would be both interesting and desirable to obtain lower bounds for the topological entropy for an arbitrary continuous function $\sigma$ in (1.1).

\medskip

We will now consider continuous families of holomorphic endomorphisms of $\mbb P^k$. For a compact metric space $M$, $\si$ a continuous self map of $M$, define $F : M \times \mbb P^k
\ra M \times \mbb P^k$ as
\begin{equation}
F(\la, z) = (\si(\la), f_{\la}(z))
\end{equation}
where $f_{\la}$ is a holomorphic endomorphism of $\mbb P^k$ that depends continuously on $\la$. Each $f_{\la}$ is assumed to have a fixed degree $d \ge 2$. Corresponding to each
$f_{\la}$  there exists a non-degenerate homogeneous holomorphic mapping $F_{\la} : \mbb C^{k+1} \ra \mbb C^{k+1}$ such that $\pi \circ F_{\la} = f_{\la} \circ \pi$ where
$\pi : \mbb C^{k+1} \setminus \{0\} \ra \mbb P^k$ is the canonical projection. Here, non-degeneracy means that $F_{\la}^{-1}(0) = 0$ which in turn implies that
there are uniform constants $l, L >0$ with
\begin{eqnarray}
l \Vert x \Vert^d \le \Vert F_{\la}(x) \Vert \le L \Vert x \Vert^d
\end{eqnarray}
for all $\la \in M$ and $x \in \mbb C^{k+1}$. Therefore for $0 < r \leq (2L)^{-1/(d-1)}$
\[
\Vert F_\lambda(x) \Vert \leq (1/2) \Vert x \Vert
\]
for all $\lambda\in M$ and $\Vert x \Vert \leq r$. Likewise for $R\geq (2l)^{-1/(d-1)}$
\[
\Vert F_\lambda(x) \Vert \geq 2 \Vert x \Vert
\]
for all $\lambda\in M$ and  $\Vert x \Vert \geq R$.

\medskip

While the ergodic properties of such a family have been considered in \cite{T1}, \cite{T2} for instance, we are interested in looking at
the basins of attraction which may be defined for each $\la \in M$ as
\[
\mathcal A_{\la} =  \big\{ x \in \mbb C^{k+1} : F_{\si^{n-1}(\la)} \circ \ldots \circ F_{\si(\la)} \circ F_{\la}(x) \ra 0 \; \text{as} \; n \ra \infty \big\}
\]
and for each $\lambda\in M$ the region of normality $\Om'_{\la} \subset \mbb P^k$ which consists of all points $z \in \mbb P^k$ for which there is a neighborhood $V_z$ on which the sequence
$\big \{f_{\si^{n-1}(\la)} \circ \ldots \circ f_{\si(\la)} \circ f_{\la} \big\}_{n \ge 1}$ is normal. Analogs of $\mathcal A_{\la}$ arising from composing a given sequence of
automorphisms of $\mbb C^n$ have been considered in \cite{PW} where an example can be found for which these are not open in $\mbb C^n$. However, since each $F_{\la}$ is homogeneous, it
is straightforward to verify that each $\cal A_{\la}$ is a nonempty, pseudoconvex complete circular domain. As in the
case of a single holomorphic endomorphism of $\mbb P^k$ (see \cite{HP}, \cite{U}), the link between these two domains is provided by the Green function.

\medskip

For each $\la \in M$ and $n \ge 1$, let
\[
G_{n, \la}(x) = \frac{1}{d^n} \log \Vert F_{\si^{n-1}(\la)} \circ \ldots \circ F_{\si(\la)} \circ F_{\la}(x) \Vert.
\]

\begin{prop}
For each $\la \in M$, the sequence $G_{n, \la}$ converges uniformly on $\mbb C^{k+1}$ to a continuous plurisubharmonic function $G_{\la}$ which satisfies
\[
G_{\la}(c x) = \log \vert c \vert + G_{\la}(x)
\]
for $c \in \mbb C^{\ast}$. Further, $d G_{\la} = G_{\si(\la)} \circ F_{\la}$, and $G_{\la_n} \ra G_{\la}$ locally uniformly on $\mbb C^{k+1} \setminus \{0\}$ as $\la_n \ra \la$ in
$M$. Finally,
\[
\cal A_{\la} = \{x \in \mbb C^{k+1} : G_{\la}(x) < 0\}
\]
for each $\la \in M$.
\end{prop}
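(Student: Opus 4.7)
The plan is to follow the classical Ueda-type construction of the Green function for a holomorphic endomorphism of $\mbb P^k$, with the uniform bounds (1.4) playing the role that the single-map estimates play for a fixed $f$. Write $Y_{n, \la}(x) = F_{\si^{n-1}(\la)} \circ \ldots \circ F_{\la}(x)$, so $G_{n, \la}(x) = d^{-n} \log \norm{Y_{n, \la}(x)}$. Applying (1.4) to $Y_{n, \la}(x)$ in place of $x$ yields $l \norm{Y_{n, \la}(x)}^d \le \norm{Y_{n+1, \la}(x)} \le L \norm{Y_{n, \la}(x)}^d$, which after taking logarithms and dividing by $d^{n+1}$ gives
\[
\bigl| G_{n+1, \la}(x) - G_{n, \la}(x) \bigr| \le \frac{C_0}{d^{n+1}}, \qquad C_0 = \max\{\abs{\log l}, \abs{\log L}\},
\]
uniformly in $(\la, x) \in M \times (\mbb C^{k+1} \sm \{0\})$. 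Summing a geometric series produces a uniform Cauchy sequence and hence a continuous limit $G_{\la}$ on $\mbb C^{k+1} \sm \{0\}$. Iterating (1.4) also gives the two-sided bound $(\log l)/(d-1) + \log \norm{x} \le G_{\la}(x) \le (\log L)/(d-1) + \log \norm{x}$, so $G_{\la}(x) \to -\infty$ as $x \to 0$, and setting $G_{\la}(0) = -\infty$ extends $G_{\la}$ continuously (in the extended sense) to all of $\mbb C^{k+1}$.

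Each $G_{n, \la}$ is plurisubharmonic as the normalized logarithm of the modulus of a holomorphic map $\mbb C^{k+1} \to \mbb C^{k+1}$, and locally uniform limits of psh functions are psh, handling all points away from the origin; the comparison with $\log \norm{x}$ above takes care of psh-ness at $0$. The homogeneity $F_{\la}(cx) = c^d F_{\la}(x)$ iterates to $Y_{n, \la}(cx) = c^{d^n} Y_{n, \la}(x)$, hence $G_{n, \la}(cx) = \log \abs{c} + G_{n, \la}(x)$ already at the finite stage, and this passes to the limit. The functional equation follows from the identity $Y_{n+1, \la}(x) = Y_{n, \si(\la)}(F_{\la}(x))$, which reads $d \, G_{n+1, \la}(x) = G_{n, \si(\la)}(F_{\la}(x))$; letting $n \to \infty$ yields $d G_{\la} = G_{\si(\la)} \circ F_{\la}$.

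For continuity in $\la$, first choose the lifts $F_{\la}$ to depend continuously on $\la$ (they are determined by $f_{\la}$ up to a nonzero scalar, and a continuous normalization is routine), so that $Y_{n, \la}(x)$ is jointly continuous in $(\la, x)$ for each fixed $n$. A standard three-$\ep$ argument combines this finite-level continuity with the Cauchy bound above, which is uniform in $\la$, to give $G_{\la_n} \to G_{\la}$ locally uniformly on $\mbb C^{k+1} \sm \{0\}$. Finally, for the identification of $\cal A_{\la}$: if $G_{\la}(x) < 0$ then $G_{n, \la}(x) < -\ep$ for large $n$, whence $\norm{Y_{n, \la}(x)} \le \exp(-\ep d^n) \to 0$, so $x \in \cal A_{\la}$. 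Conversely, if $x \in \cal A_{\la}$ then $\norm{Y_{n_0, \la}(x)}$ is eventually so small that $L \norm{Y_{n_0, \la}(x)}^{d-1} < 1$; iterating the upper bound in (1.4) forces $\norm{Y_{n_0+k, \la}(x)}$ to decay doubly exponentially in $k$, which pushes $G_{\la}(x)$ strictly below $0$. The construction has no deep obstacle; the two points requiring a little care are the extended continuity of $G_{\la}$ at the origin and the continuous choice of the lifts $F_{\la}$.
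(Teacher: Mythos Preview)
Your argument is correct and for the most part matches the paper's: the same telescoping Cauchy estimate from (1.4), the same derivation of homogeneity and of the functional equation, and the same direction $\{G_{\la}<0\}\subset\cal A_{\la}$.

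The one genuine difference is the inclusion $\cal A_{\la}\subset\{G_{\la}<0\}$. You prove it directly: once $\norm{Y_{n_0,\la}(x)}$ is small enough that $L\norm{Y_{n_0,\la}(x)}^{d-1}<1$, the rescaled quantity $b_k=L^{1/(d-1)}\norm{Y_{n_0+k,\la}(x)}$ satisfies $b_{k+1}\le b_k^{\,d}$, hence $b_k\le b_0^{\,d^k}$, and $G_{\la}(x)\le d^{-n_0}\log b_0<0$. The paper instead argues by contradiction: assuming $G_{\la_0}(x_0)=0$, it uses the maximum principle together with the continuity of $\la\mapsto G_{\la}$ to get a uniform bound $G_{\la}\le -C$ on a small ball $B_r$ contained in every $\cal A_{\la}$, and then invokes the invariance $d^nG_{\la}=G_{\si^n(\la)}\circ Y_{n,\la}$ to force $0\le -C$. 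Your route is more elementary (no maximum principle, no appeal to continuity in $\la$) and gives an explicit negative upper bound for $G_{\la}(x)$; the paper's route has the mild advantage of yielding a \emph{uniform} negative upper bound on a fixed ball, which it later reuses. Your remark about choosing the lifts $F_{\la}$ continuously is a point the paper leaves implicit when asserting the uniform constants in (1.4).
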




For each $\la \in M$, let $\mathcal H_{\la} \subset \mbb C^{k+1}$ be the collection of those points in a neighborhood of which $G_{\la}$ is pluriharmonic and define $\Om_{\la} =
\pi(\mathcal H_{\la}) \subset \mbb P^k$.

\begin{prop}
For each $\la \in M$, $\Om_{\la} = \Om'_{\la}$. Further, each $\Om_{\la}$ is pseudoconvex and Kobayashi hyperbolic.
\end{prop}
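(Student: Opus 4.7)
My plan is to prove the statement in three parts: the equality $\Om_\la = \Om'_\la$, the pseudoconvexity of $\Om_\la$, and its Kobayashi hyperbolicity.

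First I prove $\Om_\la = \Om'_\la$. For $\Om_\la \subseteq \Om'_\la$, fix $x \in \mathcal H_\la$ and choose a simply connected neighborhood $U \subset \mathcal H_\la$; pluriharmonicity of $G_\la$ on $U$ lets me write $G_\la = \log|\phi|$ for a non-vanishing holomorphic $\phi$ on $U$. Writing $F_{n,\la} = F_{\si^{n-1}(\la)} \circ \cdots \circ F_\la$ and $f_{n,\la}$ for the corresponding iterate on $\mbb P^k$, the functional equation $G_{\si^n(\la)} \circ F_{n,\la} = d^n G_\la$ together with the uniform two-sided bound $|G_\mu(z) - \log\|z\|| \le C$ (a telescoping consequence of (1.5)) yields
\[
\bigl|\log\|F_{n,\la}(y)\| - d^n \log|\phi(y)|\bigr| \le C \quad (y \in U),
\]
so $F_{n,\la}/\phi^{d^n}$ is a family of holomorphic maps $U \to \mbb C^{k+1}\setminus\{0\}$ valued in a fixed annular shell, hence normal. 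Projecting via $\pi \circ F_{n,\la} = f_{n,\la} \circ \pi$ gives normality of $\{f_{n,\la}\}$ on $\pi(U)$. For the reverse inclusion, let $z_0 \in \Om'_\la$ and $V$ a neighborhood on which $f_{n_j,\la} \to g$ uniformly. The identity $dd^c G_{n,\la} = d^{-n}\pi^*(f_{n,\la}^*\omega_{FS})$ (from $\pi^*\omega_{FS} = dd^c\log\|\cdot\|$) combined with the uniform convergence $G_{n,\la} \to G_\la$ of Proposition~1.5 gives $dd^c G_{n,\la} \to dd^c G_\la$ as currents; on $V$ the forms $f_{n_j,\la}^*\omega_{FS} \to g^*\omega_{FS}$ are locally bounded, so $d^{-n_j}f_{n_j,\la}^*\omega_{FS} \to 0$. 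Hence $G_\la$ is pluriharmonic on $\pi^{-1}(V)$ and $z_0 \in \Om_\la$.

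For pseudoconvexity, the homogeneity $G_\la(cx) = \log|c| + G_\la(x)$ makes $\mathcal H_\la$ invariant under the $\mbb C^*$-action, so it suffices to show $\mathcal H_\la \subset \mbb C^{k+1}$ is pseudoconvex, whence $\Om_\la = \pi(\mathcal H_\la)$ is pseudoconvex. I would verify the Kontinuit\"atssatz for $\mathcal H_\la$: any analytic disc $\varphi : \overline\Delta \to \mbb C^{k+1}$ with $\varphi(\partial\Delta) \subset \mathcal H_\la$ must satisfy $\varphi(\overline\Delta) \subset \mathcal H_\la$. The key inputs are the backward invariance $F_\la^{-1}(\mathcal H_{\si(\la)}) \subseteq \mathcal H_\la$ (from $d G_\la = G_{\si(\la)} \circ F_\la$), the normality of the normalized lifts $F_{n,\la}/\phi^{d^n}$ established in the first step, and the uniform comparison $|G_\mu - \log\|\cdot\|| \le C$. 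I expect this step to be the principal obstacle, since one must transplant the classical pseudoconvexity arguments of Ueda and Forn\ae ss--Sibony (formulated for a single endomorphism) to the fibered setting in which $\sigma$ is only assumed continuous, with no direct self-invariance of a single $\mathcal H_\la$ available.

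For Kobayashi hyperbolicity, suppose $\psi : \mbb C \to \Om_\la$ is a non-constant holomorphic map. Since $\mbb C$ is Stein and simply connected, $\psi^*\mathcal O_{\mbb P^k}(-1)$ is holomorphically trivial, yielding a lift $\widetilde\psi : \mbb C \to \mathcal H_\la$ with $\pi \circ \widetilde\psi = \psi$. Then $G_\la \circ \widetilde\psi$ is harmonic on $\mbb C$ (because $G_\la$ is pluriharmonic on $\mathcal H_\la$), whereas $\log\|\widetilde\psi\|$ is subharmonic, and the uniform comparison bounds their difference by $C$. Liouville for bounded subharmonic functions on $\mbb C$ forces $\log\|\widetilde\psi\| - G_\la \circ \widetilde\psi$ to be constant, so $\log\|\widetilde\psi\|$ is harmonic on $\mbb C$; Poincar\'e--Lelong then gives $\psi^*\omega_{FS} = dd^c\log\|\widetilde\psi\| = 0$, making $\psi$ constant, a contradiction. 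This yields Brody hyperbolicity of $\Om_\la$; to upgrade to Kobayashi hyperbolicity (equivalently, hyperbolic embeddedness in the compact ambient $\mbb P^k$), I would run the same argument on Brody limits of rescalings of analytic discs, invoking the continuous behavior of $G_\la$ up to $\partial\mathcal H_\la$ to handle limit curves whose images touch $\partial\Om_\la$.
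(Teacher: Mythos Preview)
Your argument for $\Omega_\lambda = \Omega'_\lambda$ is correct and close in spirit to the paper's. Your normalized lift $F_{n,\lambda}/\phi^{d^n}$ into a fixed annular shell is exactly the paper's observation that the local section $s_\lambda$ into $\partial\mathcal{A}_\lambda$ (in your notation, $x \mapsto x/\phi(x)$) is carried by $F_{n,\lambda}$ into $\partial\mathcal{A}_{\sigma^n(\lambda)} \subset B_R \setminus B_r$, a fixed bounded shell. For the reverse inclusion your current-theoretic computation $dd^c G_\lambda = \lim d^{-n}\pi^{\ast}f_{n,\lambda}^{\ast}\omega_{FS} = 0$ on $\pi^{-1}(V)$ is a legitimate alternative to the paper's explicit coordinate construction, and arguably cleaner.

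Your worry about pseudoconvexity is misplaced. Once $G_\lambda$ exists as a continuous plurisubharmonic function on $\mathbb{C}^{k+1}$ with the homogeneity $G_\lambda(cx) = \log|c| + G_\lambda(x)$, pseudoconvexity of $\Omega_\lambda$ is a static property of that single function; no further dynamics, and in particular no invariance of $\mathcal{H}_\lambda$ under any $F_\mu$, enters. The paper simply invokes Ueda's Lemma~2.4, whose proof uses only this homogeneity and the local sections $s_\lambda$ into $\partial\mathcal{A}_\lambda$ from Step~1. There is nothing to transplant to the fibered setting, so your proposed Kontinuit\"atssatz with dynamical input is unnecessary.

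There is a genuine gap in your hyperbolicity argument. What you establish is Brody hyperbolicity of $\Omega_\lambda$, and your parenthetical ``equivalently, hyperbolic embeddedness'' is incorrect: for a non-compact manifold Brody hyperbolicity does not imply Kobayashi hyperbolicity, and hyperbolic embeddedness is yet a third (stronger) notion. Your proposed upgrade via Brody rescaling limits would only produce an entire curve in $\overline{\Omega_\lambda}$, where $G_\lambda$ is no longer pluriharmonic along the lift, so the argument cannot be rerun there. The paper's route is both simpler and complete: the local section $s_\lambda : U_{\lambda,p} \to \partial\mathcal{A}_\lambda$ continues analytically along paths in any component $U$ of $\Omega_\lambda$, yielding an \emph{injective} holomorphic map $\tilde{s}_\lambda$ from the universal cover $\widetilde{U}$ into the bounded ball $B(0,2R) \supset \partial\mathcal{A}_\mu$ for all $\mu$. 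Bounded domains in $\mathbb{C}^{k+1}$ are Kobayashi hyperbolic, injective holomorphic maps do not increase the Kobayashi pseudodistance, and hyperbolicity descends along covering maps; hence $U$ is Kobayashi hyperbolic. You already have every ingredient for this --- your $\phi$ is precisely what builds $s_\lambda$ --- so the fix is to abandon the Brody detour and argue directly via the bounded lift.
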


{\bf{Acknowledgment:}} The first named author would like to thank G. Buzzard and M. Jonsson for their helpful comments in an earlier version of this paper.




\section{Fibered families of H\'{e}non maps}

\no The existence of a filtration $V^{\pm}_R, V_R$ for a H\'{e}non map is useful in localizing its dynamical behavior. To study a family of such maps, it is therefore essential to first establish the existence of a uniform filtration that works for all of them. Let
\begin{align*}
V_R^+ &= \big\{ (x, y) \in \mbb C^2 : \vert y \vert > \vert x \vert, \vert y \vert > R \big\},\\
V_R^- &= \big\{ (x, y) \in \mbb C^2 : \vert y \vert < \vert x \vert, \vert x \vert > R \big\},\\
V_R   &= \big\{ (x, y) \in \mbb C^2 : \vert x \vert, \vert y \vert \le R\}
\end{align*}
be a filtration of $\mathbb{C}^2$ where $R$ is large enough so that
\[
H_{\la}(V_R^+) \subset V_R^+
\]
for each $\la \in M$. The existence of such an $R$ is shown in the following lemma.
\begin{lem} \label{le1}
There exists $R>0$ such that
$$
H_\lambda(V_R^+)\subset V_R^+, \ \ H_\lambda(V_R^+\cup V_R)\subset V_R^+\cup V_R
$$
and
$$
H_\lambda^{-1}(V_R^-)\subset V_R^-, \ \ H_\lambda^{-1}(V_R^-\cup V_R)\subset V_R^-\cup V_R
$$
for all $\lambda \in M$. Furthermore,
$$
I_\lambda^{\pm}=\mathbb{C}^2\setminus K_\lambda^{\pm}=\bigcup_{n=0}^\infty (H_{\la}^{\pm n})^{-1}(V_R^{\pm}).
$$
\end{lem}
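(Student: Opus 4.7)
The plan is to reduce the filtration inclusions to uniform estimates on a single generalized H\'enon factor and then harvest the statement for the composition $H_\la$. Compactness of $M$, together with continuity of the $a_j(\la)$ (which are non-vanishing) and of the coefficients of $p_{j,\la}$, furnishes constants $A \ge 1$ and $C \ge 1$ with
\[
A^{-1} \le |a_j(\la)| \le A,\qquad |p_{j,\la}(y)| \le C(1+|y|^{d_j})
\]
for every $\la \in M$ and every $1 \le j \le m$. I would then fix $R$ large depending only on $A$, $C$, and the $d_j$, and work factor by factor.

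For a single factor $H_\la^{(j)}(x,y) = (y,\, p_{j,\la}(y) - a_j(\la)x)$ a direct estimate shows $H_\la^{(j)}(V_R^+) \subset V_R^+$: in $V_R^+$ the image's second coordinate has magnitude of order $|y|^{d_j}$, which for $|y|>R$ dominates both $R$ and $|y|$. The inclusion $H_\la^{(j)}(V_R) \subset V_R \cup V_R^+$ is a one-line check, since the image has first coordinate $|y| \le R$ and, should the second exceed $R$, it automatically exceeds $|y|$. A symmetric computation applied to $(H_\la^{(j)})^{-1}(x,y) = \big((p_{j,\la}(x)-y)/a_j(\la),\,x\big)$, in which the lower bound on $|a_j(\la)|$ is what is used, yields $(H_\la^{(j)})^{-1}(V_R^-) \subset V_R^-$ and $(H_\la^{(j)})^{-1}(V_R) \subset V_R \cup V_R^-$. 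Since all of these inclusions hold uniformly in $\la$ and $j$, they pass to $H_\la = H_\la^{(m)} \circ \cdots \circ H_\la^{(1)}$ and, reading in reverse order, to $H_\la^{-1} = (H_\la^{(1)})^{-1} \circ \cdots \circ (H_\la^{(m)})^{-1}$, giving all four claimed inclusions.

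For the final identity $I_\la^{\pm} = \bigcup_n (H_\la^{\pm n})^{-1}(V_R^{\pm})$ I treat the $+$ case; the $-$ case is symmetric. The inclusion $\supseteq$ is immediate, since once $H_\la^{+n}(z) \in V_R^+$, forward invariance of $V_R^+$ under every subsequent map $H_{\si^k(\la)}$ traps the orbit in $V_R^+$, where one checks that the norm is amplified by a definite factor (of order $|y|^{d-1}$) at each step. For $\subseteq$, take $z \in I_\la^+$; the orbit must eventually exit $V_R$. The main obstacle is to rule out that the orbit stays in $V_R^- \cup V_R$ forever. Since $H_\la(V_R) \subset V_R \cup V_R^+$, once the orbit enters $V_R$ it can never return to $V_R^-$; hence either the orbit eventually enters $V_R^+$ (done), or it stays in $V_R$ from some moment onward (contradicting escape), or else it remains in $V_R^-$ forever. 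To defeat the last possibility I would exploit the inflation $(H_\la^{(j)})^{-1}:\,|x| \mapsto$ something of order $|x|^{d_j}/|a_j(\la)|$ for the dominant coordinate on $V_R^-$; composing all $m$ factors shows that $H_\la$ strictly contracts $|x|$ on $V_R^-$ by a factor of order $|x|^{1-1/d}$, so iterating would force the orbit to remain bounded, contradicting $\|H_\la^{+n}(z)\|\to\infty$. Therefore $H_\la^{+n}(z) \in V_R^+$ for some $n$, which is precisely the required inclusion.
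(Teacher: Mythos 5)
Your proposal is correct and follows essentially the same route as the paper: compactness of $M$ gives uniform factor-by-factor estimates, the filtration inclusions come from treating each $H_\la^{(j)}$ separately, and the escaping-set identity is proved by a trichotomy on where the forward orbit eventually lands. Two small points are worth tightening. First, your displayed constants record only the upper bound $|p_{j,\la}(y)|\le C(1+|y|^{d_j})$, whereas the argument also needs a lower bound of order $|y|^{d_j}$ for large $|y|$ (a consequence of $p_{j,\la}$ being monic and the coefficients varying continuously on the compact set $M$); it is this lower bound that forces the second coordinate of $H_\la^{(j)}$ to dominate on $V_R^+$. Second, when you compose the factor inversions $(H_\la^{(j)})^{-1}$ to deduce that $H_\la$ contracts $|x|$ on $V_R^-$, you should verify that each $(H_\la^{(j)})^{-1}$ maps $V_R^-$ into itself (so the intermediate inflation estimates actually apply); the paper records precisely this via its chain of nested inclusions $(H_\la^{+n})^{-1}(\overline{V_R^-})$. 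Both are routine and do not affect the substance of your argument.
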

\begin{proof}
Let
\[
p_{j,\lambda}(y)=y^{d_j} + c_{\lambda(d_j-1)}y^{d_j-1} + \ldots + c_{\lambda 1}y + c_{\lambda 0}
\]
be the polynomial that occurs in the definition of $H_\lambda^{(j)}$. Then

\begin{equation}
\vert y^{-d_j} p_{j, \la}(y) - 1 \vert \le \vert c_{\la(d_j - 1)} y^{-1} \vert + \ldots + \vert c_{\la 1} y^{-d_j + 1} \vert + \vert c_{\la 0} y^{-d_j} \vert. \label{0}
\end{equation}

Let $a=\sup_{\lambda,j}|a_{\lambda,j}|$. Since the coefficients of $p_{j,\lambda}$ are continuous on $M$, which is assumed to be compact, and $d_j \ge 2$ it follows that there exists
$R>0$ such that
\[
\vert p_{j,\lambda}(y)  \vert \geq (2 + a) \vert y \vert
\]
for $\vert y \vert>R$, $\lambda\in M$ and $1\leq j \leq m$. To see that $H_\lambda(V_R^+)\subset V_R^+$ for this $R$, pick $(x,y)\in V_R^+$. Then
\begin{equation}
\lvert p_{j,\lambda}(y)-a_j(\lambda)x\rvert \geq \lvert p_{j,\lambda}(y)\rvert -\lvert a_j(\lambda)x\rvert
\geq \lvert y \rvert \label{1}
\end{equation}
for all $1\leq j \leq m$. It follows that the second coordinate of each $H_\lambda^{(j)}$ dominates the first one. This implies that
\[
H_\lambda(V_R^+)\subset V_R^+
\]
for all $\lambda\in M$. The other invariance properties follow by using similar arguments.

\medskip

Let $\rho>1$ be such that
\[
\lvert p_{j,\lambda}(y)-a_j(\lambda)x \rvert > \rho \lvert y \rvert
\]
for $(x,y)\in \overline{V_R^+}$, $\lambda\in M$ and $1\leq j \leq m$. That such a $\rho$ exists follows from  (\ref{1}). By letting $\pi_1$ and $\pi_2$ be the projections on the first
and second coordinate respectively, one can conclude inductively that
\begin{equation}
H_\lambda(x,y)\in V_R^+ \text{ and } \vert \pi_2(H_\lambda(x,y)) \vert >\rho^m \vert y \vert. \label{2}
\end{equation}
Analogously, for all $(x,y)\in \overline{V_R^{-}}$ and for all $\lambda\in M$, there exists a  $\rho>1$ satisfying
\begin{equation}
H_\lambda^{-1}(x,y)\in V_R^- \text{ and }|\pi_1(H_\lambda^{-1}(x,y))|>\rho^m|x|.\label{2.1}
\end{equation}
These two facts imply that
\begin{equation}
\overline{V_R^+} \subset H_{\la}^{-1}(\overline{V_R^+})\subset H_{\la}^{-1} \circ H_{\si(\la)}^{-1} (\ov{V_R^+}) \subset  \ldots \subset (H_{\la}^{+n})^{-1}(\overline{V_R^+})\subset
\ldots
\end{equation}
and
\begin{equation}
\overline{V_R^{-}} \supset H_{\la}^{-1}(\overline{V_R^{-}})\supset H_{\la}^{-1} \circ H_{\si(\la)}^{-1}(\ov{V_R^-}) \supset \ldots \supset (H_{\la}^{+n})^{-1}(\overline{V_R^-})\supset
\ldots .
\label{3}
\end{equation}

\medskip

At this point one can observe that if we start with a point in $\overline{V_R^+}$, it eventually escapes toward the point at infinity under forward iteration determined by the
continuous function $\sigma$, i.e., $\vert H_{\la}^{+n}(x, y) \vert \rightarrow \infty$ as $n\rightarrow \infty$. This can be
justified by using (\ref{2}) and observing that
\begin{equation*}
\lvert y_\lambda^n \rvert > \rho^m \lvert y_\lambda^{n-1} \rvert> \rho^{2m}\lvert y_\lambda^{n-2} \rvert> \ldots >\rho^{nm}\lvert y \rvert>\rho^{nm}R
\end{equation*}
where $H_{\la}^{+n}(x, y) =(x_\lambda^n,y_\lambda^n)$. A similar argument shows that if we start with any point in $(x,y)\in
\bigcup_{n=0}^{\infty} (H_{\la}^{+n})^{-1}(V_R^+)$ the orbit of the point never remains bounded. Therefore
\begin{equation}
\bigcup_{n=0}^{\infty} (H_{\la}^{+n})^{-1}(V_R^+)\subseteq I_\lambda^+.
\end{equation}
 Moreover using (\ref{2}) and (\ref{2.1}), we get
\[
(H_{\la}^{-n})^{-1}(V_R^+)\subseteq \big\{(x,y):\lvert y\rvert > \rho^{nm}R \big\}
\]
and
\[
(H_{\la}^{+n})^{-1}(V_R^-)\subseteq \big\{(x,y):\lvert x \rvert > \rho^{nm}R \big\}
\]
which give
\begin{equation}
\bigcap_{n=0}^{\infty} (H_{\la}^{-n})^{-1}(V_R^+) = \bigcap_{n=0}^{\infty} (H_{\la}^{-n})^{-1}(\overline{V_R^+})=\phi \label{4}
\end{equation}
and
\begin{equation}
\bigcap_{n=0}^{\infty} (H_{\la}^{+n})^{-1}(V_R^-)= \bigcap_{n=0}^{\infty} (H_{\la}^{+n})^{-1}(\overline{V_R^{-}})=\phi \label{4.1}
\end{equation}
respectively. Set
\[
W_R^+=\mathbb{C}^2\setminus \overline{V_R^{-}} \text{ and }W_R^-=\mathbb{C}^2\setminus \overline{V_R^+}.
\]
Note that (\ref{3}) and (\ref{4.1}) are equivalent to
\begin{equation}
W_R^+\subset H_{\lambda}^{-1}(W_R^+) \subset \ldots \subset (H_{\la}^{+n})^{-1}(W_R^+)\subset \ldots
\end{equation}
and
\begin{equation}
\bigcup_{n=0}^{\infty} (H_{\la}^{+n})^{-1}(W_R^+)= \mathbb{C}^2 \label{5}
\end{equation}
respectively. Now (\ref{5}) implies that for any point $(x,y)\in \mathbb{C}^2$ there exists $n_0>0$ such that $H_{\la}^{+n}(x,y)\in
W_R^+\subset V_R\cup \overline{V_R^+}$ for all $n\geq n_0$. So either
\[
H_{\la}^{+n}(x,y)\in V_R
\]
for all $n \ge n_0$ or there exists $n_1 \geq n_0$ such that $H_{\la}^{+n_1}(x,y)\in \overline{V_R^+}$. In the latter case,
$H_{\la}^{+(n_1+1)}(x,y)\in V_R^+$ by (\ref{2}). This implies that
\begin{equation*}
I_\lambda^{+}=\mathbb{C}^2\setminus K_\lambda^{+}=\bigcup_{n=0}^\infty (H_{\la}^{+n})^{-1}(V_R^{+}).\label{5.1}
\end{equation*}
A set of similar arguments yield
\begin{equation*}
I_\lambda^{-}=\mathbb{C}^2\setminus K_\lambda^{-}=\bigcup_{n=0}^\infty (H_{\la}^{-n})^{-1}(V_R^{-}).
\end{equation*}
\end{proof}

\begin{rem}\label{re1}
It follows from Lemma \ref{le1} that for any compact $A_\lambda \subset \mathbb{C}^2$ satisfying $A_\la \cap K_\lambda^+=\phi$, there exists $N_\lambda>0$
such that $H_{\la}^{+n_{\la}}(A_\lambda)\subseteq V_R^+$. More generally, for any compact $A \subset \mathbb{C}^2$
that satisfies $A\cap K_\lambda^+=\phi$ for each $\lambda\in M$,
there exists $N>0$ so that $H_{\la}^{+N}(A)\subseteq V_R^+$ for all $\lambda\in M$. The proof again relies on the fact that
the coefficients of $p_{j,\lambda}$ and $a_j(\lambda)$
vary continuously in $\lambda$ on the compact set $M$ for all $1 \le j \le m$.
\end{rem}

\begin{rem}\label{re2}
By applying the same kind of techniques as in the case of a single H\'{e}non map, it is possible to show that $I_\lambda^{\pm}$ are nonempty, pseudoconvex domains and $K_\lambda^{\pm}$
are closed sets satisfying $K_\lambda^{\pm}\cap V_R^{\pm}=\phi$ and having nonempty intersection with the $y$-axis and $x$-axis respectively. In particular, $K_\lambda^{\pm}$ are
nonempty and unbounded.
\end{rem}

\subsection*{Proof of Proposition \ref{pr1}}
Since the polynomials $p_{j, \la}$ are all monic, it follows that for every small $\ep_1 > 0$ there is a large enough $R > 1$ so that for all
$(x,y)\in \overline{V_R^+}$, $1\leq j \leq m$ and for all $\lambda\in M$, we have $H_\lambda^{(j)}(x,y)\in V_R^+$
and
\begin{equation}
(1-\ep_1)\lvert y \rvert^{d_j}<\lvert \pi_2\circ H_\lambda^{(j)}(x,y)\rvert < (1+\ep_1)\lvert y \rvert^{d_j}. \label{7}
\end{equation}
For a given $\ep > 0$, choose an $\epsilon_1>0$ small enough so that the constants
\[
A_1=\prod_{j=1}^m (1-\epsilon_1)^{d_{j+1} \ldots d_m} \text{ and }
A_2=\prod_{j=1}^m (1+\epsilon_1)^{d_{j+1} \ldots d_m}
\]
(where $d_{j+1} \ldots d_m=1$ by definition when $j=m$) satisfy $1-\epsilon \leq A_1$ and $A_2 \leq 1+\epsilon$. Therefore by applying (\ref{7}) inductively, we get
\begin{equation}
(1-\epsilon)\lvert y \rvert^{d} \leq A_1\lvert y \rvert^{d}<\lvert \pi_2\circ H_\lambda(x,y)\rvert<A_2\lvert y \rvert^{d}\leq (1+\epsilon)\lvert y \rvert^{d}\label{8}
\end{equation}
for all $\lambda\in M$ and for all $(x,y)\in \overline{V_R^+}$. Let $(x,y)\in \overline{V_R^+}$. In view of (\ref{2}) there exists a large $R>1$ so that
$H_\lambda^{+n}(x,y)=(x_\lambda^n,y_\lambda^n)\in V_R^+$ for all $n\geq 1$ and for all $\lambda\in M$. Therefore
$$
G_{n,\lambda}^+(x,y)=\frac{1}{d^n}\log\lvert \pi_2\circ H_\lambda^{+n}(x,y)\rvert
$$
and by applying (\ref{8}) inductively we obtain
\begin{equation*}
(1-\epsilon)^{1+d+ \ldots +d^{n-1}} \lvert y \rvert^{d^n}<\lvert y_\lambda^n \rvert<
(1+\epsilon)^{1+d+ \ldots +d^{n-1}}\lvert y \rvert^{d^n}.
\end{equation*}
Hence
\begin{equation}
0<\log\lvert y \rvert+K_1<G_{n,\lambda}^+(x,y)=\frac{1}{d^n}\log\lvert \pi_2\circ H_\lambda^{+n}(x,y)\rvert<\log\lvert y \rvert+K_2,\label{9}
\end{equation}
with $K_1= (d^n-1)/(d^n(d-1)) \log(1-\epsilon)$ and $K_2= (d^n-1)/(d^n(d-1)) \log(1+\epsilon)$.

\medskip

By (\ref{9}) it follows that
$$
\lvert G_{n+1,\lambda}^+(x,y)-G_{n,\lambda}^+(x,y)\rvert=\left | d^{-n -1} \log\lvert {y_\lambda^{n+1}}/{(y_\lambda^n)^d}\rvert\right | \lesssim d^{-n-1}
$$
which proves that $\{G_{n,\lambda}^+\}$ converges uniformly on $\overline{V_R^+}$. As a limit of a sequence of uniformly convergent pluriharmonic functions $\{G_{n,\lambda}^+\}$,
$G_\lambda^+$ is also pluriharmonic for each $\lambda\in M$ on $V_R^+$. Again by (\ref{9}), for each $\lambda\in M$,
\[
G_\lambda^+-\log\lvert y \rvert
\]
is a bounded pluriharmonic function in $\overline{V_R^+}$. Therefore its restriction to vertical lines of the form $x = c$ can be continued across the point $(c, \infty)$
as a pluriharmonic function. Since
\[
\lim_{\lvert y \rvert\rightarrow \infty}(G_\lambda^+(x,y)-\log\lvert y \rvert)
\]
is bounded in $x\in \mathbb{C}$ by (\ref{9}) it follows that $\lim_{\lvert y \rvert\rightarrow \infty}(G_\lambda^+(x,y)-\log\lvert y \rvert)$ must be a constant, say $\gamma_\lambda$ which also satisfies
$$
\log (1-\epsilon)/(d-1) \leq \gamma_\lambda \leq
\log (1+\epsilon)/(d-1).
$$
As $\epsilon > 0$ is arbitrary, it follows that
\begin{equation}
G_\lambda^+(x,y)=\log\lvert y \rvert + u_\lambda(x,y) \label{9.1}
\end{equation}
on $V_R^+$ where $u_\lambda$ is a bounded pluriharmonic function satisfying $u_\lambda(x,y) \ra 0$ as $\vert y \vert \ra \infty$.

\medskip

Now fix $\lambda\in M$ and $n\geq 1$. For any $r > n$
\begin{eqnarray*}
G_{r,\lambda}^+(x,y)&=& d^{-r}{\log}^+\lvert H_\lambda^{+r}(x,y)\rvert \\
&=& d^{-n}G_{(r-n),\sigma^n(\lambda)}^+\circ H_\lambda^{+n}(x,y).
\end{eqnarray*}

As $r\rightarrow \infty$, $G_{r,\lambda}^+$ converges uniformly on $(H_{\la}^{+n})^{-1}(V_R^+)$ to the pluriharmonic function  $d^{-n} G_{\sigma^n(\lambda)}^+ \circ H_\lambda^{+n}$.
Hence
$$
d^n G_\lambda^+(x,y)=G_{\sigma^n(\lambda)}^+\circ H_\lambda^{+n}(x,y)
$$
for $(x,y)\in (H_\lambda^{+n})^{-1}(V_R^+)$. By (\ref{9}), for $(x,y)\in (H_\lambda^{+n})^{-1}(V_R^+)$
$$
G_{r,\lambda}^+(x,y)=d^{-n}G_{(r-n),\sigma^n(\lambda)}^+\circ H_\lambda^{+n}(x,y)> d^{-n}(\log R + K_1)> 0,
$$
for each $r>n$ which shows that
\[
G_\lambda^+(x,y)\geq d^{-n}(\log R + K_1)>0
\]
for $(x,y)\in (H_\lambda^{+n})^{-1}(V_R^+)$. This is true for each $n\geq 1$. Hence $G_{r,\lambda}^+$ converges uniformly to the pluriharmonic function $G_\lambda^+$ on every compact
set
of
\[
\bigcup_{n=0}^\infty (H_\lambda^{+n})^{-1}(V_R^+)=\mathbb{C}^2\setminus K_\lambda^+.
\]
Moreover $G_\lambda^+ >0$ on $\mathbb{C}^2\setminus K_\lambda^+$.

\medskip

Note that for each $\lambda\in M$, $G_\lambda^+=0$ on $K_\lambda^+$.
By Remark \ref{re2}, there exists a large enough $R>1$ so that $K_\lambda^+\subseteq V_R \cup V_R^-$ for all $\lambda\in M$. Now choose any $A>R>1$. We will show that
$\{G_{n,\lambda}^+\}$ converges uniformly to $G_\lambda^+$ on the bidisc
\[
\Delta_A=\{(x,y):\lvert x \rvert\leq A,\lvert y \rvert\leq A\}
\]
as $n\ra\infty$. Consider the sets
\[
N=\{(x,y)\in \mathbb{C}^2: \lvert x \rvert \leq A\}, \;N_\lambda=N\cap K_\lambda^+
\]
for each $\lambda\in M$. Start with any point $z=(x_0,x_1)\in \mathbb{C}^2$ and define
$(x_i^\lambda,x_{i+1}^\lambda)$ for $\lambda\in M$ and $i\geq 1$ in the following way:
\[
(x_0^\lambda,x_1^\lambda) \xrightarrow{H_\lambda^{(1)}} (x_1^\lambda,x_2^\lambda) \xrightarrow{H_\lambda^{(2)}} \ldots \xrightarrow{H_\lambda^{(m)}}
(x_m^\lambda,x_{m+1}^\lambda)\xrightarrow{H_\lambda^{(1)}} (x_{m+1}^\lambda,x_{m+2}^\lambda)\ra \ldots ,
\]
where $(x_0^\lambda, x_1^\lambda)=(x_0,x_1)$ and we apply $H_\lambda^{(1)}, \ldots ,H_\lambda^{(m)}$ periodically for all $\lambda\in M$. Inductively one can show that if
$(x_i^\lambda,x_{i+1}^\lambda)\in N_\lambda$ for $0\leq i \leq j-1$, then $\lvert x_i^\lambda\rvert \leq A $ for $0\leq i \leq j$.


\medskip

This implies that there exists $n_0>0$ independent of $\lambda$ so that
\begin{equation}
G_{n,\lambda}^+(x,y)< \epsilon \label{10}
\end{equation}
for all $n\geq n_0$ and for all $(x,y)\in N_\lambda$.
Consider a line segment
\[
L_a=\{(a,w):\lvert w \rvert\leq A\} \subset \mbb C^2
\]
with $\lvert a \rvert \leq A$. Then $G_{n,\lambda}^+-G_\lambda^+$ is harmonic on
$L_a^\lambda=\{(a,w):\lvert w \rvert < A\}\setminus K_\lambda^+$ viewed as a subset of $\mathbb{C}$ and the boundary of $L_a^\lambda$ lies in $\{(a,w):\lvert w \rvert=A\}\cup
(K_\lambda^+\cap L_a)$. By Remark \ref{re1}, there exists $n_1>0$ so that
$$
-\epsilon< G_{n,\lambda}^+(a,w)-G_{\lambda}^+(a,w)<\epsilon
$$
for all $n\geq n_1$ and for all $(a,w)\in\{\lvert a \rvert\leq A,\lvert w \rvert=A\}$. The maximum principle shows that
$$
-\epsilon <G_{n,\lambda}^+(x,y)-G_\lambda^+(x,y) < \epsilon
$$
for all  $n\geq \max\{n_0,n_1\}$ and for all  $(x,y)\in L_a^\lambda$. This shows that for any given $\epsilon>0$ there exists $n_2>0$ such that
\begin{equation}
-\epsilon< G_{n,\lambda}^+(z)-G_\lambda^+(z)<\epsilon \label{10.5}
\end{equation}
for all $n\geq n_2$ and for all $(\lambda,z)\in M\times \Delta_A$.

\medskip

Hence $G_{n,\lambda}^+$ converges uniformly to $G_\lambda^+$ on any compact subset of $\mathbb{C}^2$ and this convergence is also uniform with respect to $\lambda\in M$.
In particular this implies that for each $\lambda\in M$, $G_\lambda^+$ is continuous  on $\mathbb{C}^2$ and pluriharmonic on $\mathbb{C}^2\setminus K_\lambda^+$. Moreover $G_\lambda^+$
vanishes on $K_\lambda^+$. In particular, for each $\lambda\in M$, $G_\lambda^+$ satisfies the submean value property on $\mathbb{C}^2$. Hence $G_\lambda^+$ is
plurisubharmonic on $\mathbb{C}^2$.

\medskip

Next, to show that the correspondence $\lambda \mapsto G_\lambda^{\pm}$ is continuous, take a compact set $S\subset \mathbb{C}^2$ and $\lambda_0\in M$. Then
\begin{multline*}
\vert G_{\la}^+(x,y)-G_{\la_0}^+(x,y) \vert \le \vert G_{n,\la}^+(x,y)-G_{\la}^+(x,y)\vert + \vert G_{n,\lambda}^+(x,y)-G_{n,\lambda_0}^+(x,y)\vert \\
                                              + \vert G_{n,\lambda_0}^+(x,y)-G_{\lambda_0}^+(x,y)\vert
\end{multline*}
for $(x,y)\in S$. It follows from (\ref{10.5}) that for given $\epsilon>0$, one can choose a large $n_0>0$ such that the first and third  terms above are less that $\ep/3$. By
choosing $\lambda$ close enough to $\lambda_0$ it follows that $G_{n_0,\lambda}^+(x,y)$ and $G_{n_0,\lambda_0}^+(x,y)$ do not differ by more than $\ep/3$. Hence the correspondence
$\lambda\mapsto G_\lambda^{+}$ is continuous. Similarly, the correspondence $\lambda\mapsto G_\lambda^-$ is also continuous.

\medskip

To prove that $G_\lambda^+$ is H\"{o}lder continuous for each $\lambda\in M$, fix a compact $S \subset \mbb C^2$ and let $R > 1$ be such that $S$ is compactly contained in $V_R$. Using
the continuity of $G_{\la}^+$ in $\la$, there exists a $\de > 0$ such that $G_{\la}^+(x, y) > (d + 1)\de$ for each $\la \in M$ and $(x, y) \in V_R^+$.
Now note that the correspondence $\la \mapsto K_{\la}^+ \cap V_R$ is upper semi-continuous. Indeed, if this is not the case, then there exists a $\la_0 \in M$, an $\ep > 0$ and a sequence
$\la_n \in M$ converging to $\la_0$ such that for each $n \ge 1$ there exists a point $a_n \in K_{\la_n}^+ \cap V_R$ satisfying $\vert a_n - z \vert \ge \ep$ for all $z \in
K_{\la_0}^+$. Let $a$ be a limit point of the $a_n$'s. Then by the continuity of $\la \mapsto G_{\la}^+$ it follows that
\[
0 = G_{\la_n}^+(a_n) \ra G_{\la}^+(a)
\]
which implies that $a \in K_{\la_0}^+$. This is a contradiction. For each $\lambda\in M$, define
\[
\Omega_\delta^\lambda= \big\{ (x,y)\in V_R : \delta < G_\lambda^+(x,y) \leq d \delta \big\}
\]
and
\[
C_\lambda=\sup\big\{ \lvert {\partial G_\lambda^+}/{\partial x}\rvert,\lvert {\partial G_\lambda^+}/{\partial y}\rvert :(x,y)\in \Omega_\delta^\lambda \big\}.
\]
The first observation is that the $C_{\la}$'s are uniformly bounded above as $\la$ varies in $M$. To see this, fix $\la_0 \in M$ and $\tau > 0$ and let $W \subset M$ be a neighbourhood
of $\la_0$ such that the sets
\[
\Om_W = \ov{\bigcup_{\la \in W} \Om_{\de}^{\la}} \;\; \text{and} \;\; K_W = \ov{ \bigcup_{\la \in W} (K_{\la}^+ \cap V_R)}
\]
are separated by a distance of at least $\tau$. This is possible since $K_{\la}^+ \cap V_R$ is upper semicontinuous in $\la$. For each $\la \in W$, $G_{\la}^+$ is pluriharmonic on a
fixed slightly larger open set containing $\Om_W$. Cover the closure of this slightly larger open set by finitely many open balls and on each ball, the mean value property shows that
the derivatives of $G_{\la}^+$ are dominated by a universal constant times the sup norm of $G_{\la}^+$ on it -- and this in turn is dominated by the number of open balls (which is the
same for all $\la \in W$) times the sup norm of $G_{\la}^+$ on $V_R$ upto a univeral constant. Since $G_{\la}^+$ varies continuously in $\la$, it follows that the $C_{\la}$'s are
uniformly bounded for $\la \in W$ and the compactness of $M$ gives a global bound, say $C > 0$ independent of $\la$.

\medskip

Fix $\la_0 \in M$ and pick $(x, y) \in S \setminus K_{\la_0}^+$. Let $N > 0$ be such that
\[
d^{-N} \de < G_{\la_0}^+(x, y) \le d^{-N + 1} \de
\]
so that
\[
\de < d^N G_{\la_0}^+(x, y) \le d \de.
\]
The assumption that $N > 0$ means that $(x, y)$ is very close to $K_{\la_0}^+$. But
\[
d^N G_{\la_0}^+(x, y) = G_{\si^N(\la_0)}^+ \circ H_{\la_0}^{+N}(x, y)
\]
which implies that $H_{\la_0}^{+N}(x, y) \in \Om_{\de}^{\si^N(\la_0)}$ where $G_{\si^N(\la_0)}^+$ is pluriharmonic. Note that
\[
H_{\la_0}(V_R \cup V_R^+) \subset V_R \cup V_R^+, \; H_{\la_0}(V_R^+) \subset V_R^+
\]
which shows that $H_{\la_0}^{+k} \in V_R$ for all $k \le N$ since all the $G_{\la}^+$'s are at least $(d+1)\de$ on $V_R^+$.
Differentiation of the above identity leads to
\[
d^N \frac{\pa G_{\la_0}^+}{\pa x}(x, y) = \frac{\pa G_{\si^N(\la_0)}^+}{\pa x} (H_{\la_0}^{+N}) \frac{ \pa (\pi_1 \circ H_{\la_0}^{+N}) }{\pa x}(x, y) + \frac{\pa
G_{\si^N(\la_0)}^+}{\pa y}(H_{\la_0}^{+N})  \frac{ \pa (\pi_2 \circ H_{\la_0}^{+N}) }{\pa x}(x, y).
\]
Let the derivatives of $H_{\la}$ be bounded above on $V_R$ by $A_{\la}$ and let $A = \sup A_{\la} < \infty$. It follows that the derivatives of $H_{\la_0}^{+N}$ are bounded above by
$2^{N-1}A^N$ on $V_R$. Hence
\[
\vert d^N \pa G_{\la_0}^+ / \pa x (x, y) \vert \le C (2A)^N.
\]
Let $\ga = \log 2A/ \log d$ so that $C (2A)^N = C d^{N \ga}$. Therefore
\[
\vert \pa G_{\la_0}^+ / \pa x \vert \le C d^{N(\ga - 1)} \le C (d \de/G_{\la_0}^+)^{\ga - 1}
\]
which implies that
\[
\vert \pa (G_{\la_0}^+)^{\ga}/ \pa x \vert \le C \ga(d \de)^{\ga - 1}.
\]
A similar argument can be used to bound the partial derivative of $(G_{\la_0}^+)^{\ga}$ with respect to $y$. Thus the gradient of $(G_{\la_0}^+)^{\ga}$ is bounded uniformly at all
points that are close to $K_{\la_0}^+$.

\medskip

Now suppose that $(x, y) \in S \setminus K_{\la_0}^+$ is such that
\[
d^{N} \de < G_{\la_0}^+(x, y) \le d^{N + 1} \de
\]
for some $N > 0$. This means that $(x, y)$ is far away from $K_{\la_0}^+$ and the above equation can be written as
\[
\de < d^{-N} G_{\la_0}^+(x, y) \le d \de.
\]
By the surjectivity of $\si$, there exists a $\mu_0 \in M$ such that $\si^N(\mu_0) = \la_0$. With this the invariance property of the Green's functions now reads
\[
G_{\mu_0}^+ \circ (H_{\mu_0}^{+N})^{-1}(x, y) = d^{-N} G_{\la_0}^+(x, y).
\]
The compactness of $S$ shows that there is a fixed integer $m < 0$ such that if $(x, y)$ is far away from $S \setminus K_{\la_0}^+$ then it be can brought into the strip
\[
\big\{ (x,y) : \delta < G_{\lambda_0}^+(x,y) \leq d \delta \big\}
\]
by $(H_{\la}^{+ \vert k \vert})^{-1}$ for some $m \le k < 0$ and for all $\la \in M$. By enlarging $R$ we may assume that the image of $S$ under all the maps $(H_{\la}^{+ \vert k
\vert})^{-1}$, $m \le k < 0$ is contained in $V_R$. By increasing $A$, we may also assume that all the derivatives of $H_{\la}$ and $H_{\la}^{-1}$ are bounded by $A$ on $V_R$.
Now repeating the same argument as above, it follows that the gradient of $(G_{\la_0}^+)^{\ga}$ is bounded uniformly at all points that are far away from $K_{\la_0}^+$ -- the nuance
about choosing $\ga$ as before is also valid. The choice of $\mu_0$ such that $\si^{N}(\mu_0) = \la_0$ is irrelevant since the derivatives involved are with respect to $x, y$ only.
The only remaining case is when $(x, y) \in \Om_{\la_0}^{\de}$ which precisely means that $N = 0$. But in this case, $(G_{\la_0}^+)^{\ga - 1}$ is uniformly bounded
on $V_R$ and so are the derivatives of $G_{\la_0}^+$ on $\Om_{\la_0}^{\de}$ by the reasoning given earlier. Therefore there is a uniform bound on the gradient of $(G_{\la_0}^+)^{\ga}$
everywhere on $S$. This shows that $(G_{\la_0}^+)^{\ga}$ is Lipschitz on $S$ which implies that $G_{\la_0}^+$ is H\"{o}lder continuous on $S$ with exponent $1/\ga = \log d/ \log 2A$.
A set of similar arguments can be applied to deduce analogous results for $G_{\la}^{-}$.


\subsection*{Proof of Proposition \ref{pr2}}

We have
\[
(H_{\la}^{\pm 1})^{\ast}(\mu_{\sigma(\lambda)}^\pm) = (H_{\la}^{\pm 1})^{\ast}(dd^cG_{\sigma(\lambda)}^\pm) = dd^c(G_{\sigma(\lambda)}^\pm \circ H_\lambda^{\pm 1}) = dd^c(d^{\pm
1}G_\lambda^\pm) = d^{\pm 1}\mu_\lambda^\pm
\]
where the third equality follows from Proposition \ref{pr1}. A similar exercise shows that
\[
(H_{\la}^{\pm 1})_{\ast} \mu_{\la}^{\pm} = d^{\mp 1} \mu_{\si(\la)}^{\pm}.
\]
If $\si$ is the identity on $M$, then
\[
G_{\la}^+ \circ H_{\la}^{\pm 1} = d^{\pm 1} G_{\la}^{+} \; \text{and} \; G_{\la}^{-} \circ H_{\la}^{\pm 1} = d^{\mp 1} G_{\la}^{-}
\]
which in turn imply that
\[
(H_{\la}^{\pm 1})^{\ast} \mu_{\la} = (H_{\la}^{\pm 1})^{\ast} (\mu_{\la}^+ \wedge \mu_{\la}^-) = (H_{\la}^{\pm 1})^{\ast} \mu_{\la}^+ \wedge (H_{\la}^{\pm 1})^{\ast} \mu_{\la}^- =
d^{\pm 1} \mu_{\la}^+ \wedge d^{\mp 1} \mu_{\la}^- = \mu_{\la}.
\]

\medskip

By Proposition \ref{pr1}, the support of $\mu_\lambda^+$ is contained in $J_\lambda^+$. To prove the converse, let $z_0\in J_\lambda^+$ and suppose that $\mu_\lambda^+ =0$ on a
neighbourhood $U_{z_0}$ of $z_0$. This means that $G_\lambda^+$ is pluriharmonic on $U_{z_0}$ and $G_\lambda^+$ attains its minimum value of zero at $z_0$. This implies
that $G_\lambda^+ \equiv 0$ on $U_{z_0}$ which contradicts the fact that $G_\lambda^+>0$ on $\mathbb{C}^2\setminus K_\lambda^+$. Similar arguments can be applied to prove that
supp$(\mu_\lambda^-)=J_\lambda^-$.

 \medskip

Finally, to show that $\la \mapsto J_{\la}^+$ is lower semicontinuous, fix $\la_0 \in M$ and $\ep > 0$. Let $x_0\in
J_{\lambda_0}^+= {\rm supp}(\mu_{\lambda_0}^+)$. Then $\mu_{\lambda_0}^+(B(x_0, {\epsilon}/{2}))\neq 0$. Since the correspondence $\lambda \mapsto \mu_\lambda^+$ is continuous, there
exists a $\delta>0$ such that
\[
d(\lambda,\lambda_0)<\delta \text{ implies } \mu_\lambda^+(B(x_0;{\epsilon}/{2}))\neq 0.
\]
Therefore $x_0\in {(J_\lambda^+)}^\epsilon=\bigcup_{a\in J_\lambda^+}B(a,\epsilon)$ for all $\lambda \in M$ satisfying $d(\lambda,\lambda_0)< \delta$.
Hence the correspondence $\lambda\mapsto J_\lambda^{\pm}$ is lower semicontinuous.

\medskip

Let $\cal L$ be the class of plurisubharmonic functions on $\mbb C^2$ of logarithmic growth, i.e.,
$$
\mathcal{L}=\{ u\in \mathcal{PSH}(\mathbb{C}^2): u(x,y)\leq \log^+\lVert (x,y) \rVert +L \}
$$
for some $L>0$ and let
$$
\tilde{\mathcal{L}}=\{ u\in \mathcal{PSH}(\mathbb{C}^2):\log^+\lVert (x,y) \rVert -L \leq u(x,y)\leq \log^+\lVert (x,y) \rVert +L\}
$$
for some $L>0$. Note that there exists $L>0$ such that
$$
G_\lambda^+(z)\leq \log^+ \lVert z \rVert +L
$$
for all $z\in \mathbb{C}^2$ and for all $\lambda\in M$. Thus $G_\lambda^+ \in \mathcal{L}$ for all $\lambda\in M$. For $E\subseteq \mathbb{C}^2$, the pluricomplex Green function
of $E$ is
$$
L_E(z)=\sup\{u(z):u\in\mathcal{L},u\leq 0 \text{ on } E\}.
$$
and let $L_E^{\ast}(z)$ be its upper semicontinuous regularization.

\medskip

It turns out that the pluricomplex Green function of $K_\lambda^{\pm}$ is $G_\lambda^{\pm}$ for all $\lambda\in M$. The arguments are similar to those employed for a single H\'{e}non
map and we merely point out the salient features. Fix $\lambda\in M$. Then $G_{\lambda}^+=0$ on $K_\lambda^+$ and  $G_\lambda^+ \in \mathcal{L}$. So $G_\lambda^+ \leq L_{K_{\lambda}^+}$.
To show equality, let $u\in \mathcal{L}$
be such that $u\leq 0=G_\lambda^+$ on $K_\lambda^+$. By Proposition \ref{pr1}, there exists $M>0$ such that
\[
\log\lvert y \rvert-M<G_\lambda^+(x,y)<\log\lvert y \rvert+M
\]
for $(x,y)\in V_R^+$. Since $u\in \mathcal{L}$,
\[
u(x,y)-G_\lambda^+(x,y)\leq M_1
\]
for some $M_1 > 0$ and $(x,y)\in V_R^+.$

Fix $x_0 \in\mbb C$ and note that $u(x_0,y)-G_\lambda^+(x_0,y)$ is a bounded subharmonic function on the vertical line $T_{x_0}=\mathbb{C}\setminus (K_\lambda^+ \cap \{x=x_0\})$ and
hence it can be extended across the point $y=\infty$ as a subharmonic function. Note also that
$$
u(x_0,y)-G_\lambda(x_0,y)\leq 0
$$
on $\partial T \subseteq K_\lambda^+ \cap \{x=x_0\}$. By the maximum principle it follows that $u(x_0,y)-G_\lambda(x_0,y)\leq 0$ on $T_{x_0}$. This implies that $u\leq G_\lambda^+
\text{ in } \mathbb{C}^2\setminus K_\lambda^+$ which in turn shows that
$L_{K_{\lambda}^{+}}=G_{\lambda}^{+}$. Since $G_\lambda^+$ is continuous on $\mathbb{C}^2$, we have
\[
L_{K_{\lambda}^{+}}=L^{\ast}_{K_{\lambda}^{+}}=G_\lambda^+.
\]
Similar arguments show that
\[
L_{K_{\lambda}^{-}}=L^{\ast}_{K_{\lambda}^{-}}=G_{\lambda}^{-}.
\]
Let $u_\lambda=\max \{G_\lambda^+,G_\lambda^-\}$. Again by Proposition \ref{pr1} it follows that $u_\lambda\in \tilde{\mathcal{L}}$.
For $\epsilon>0$, set $G_{\lambda,\epsilon}^{\pm}=\max \{G_\lambda^{\pm},\epsilon\}$ and $u_{\lambda,\epsilon}=\max \{G_{\lambda,\epsilon}^+, G_{\lambda,\epsilon}^{-}\}$.
By Bedford--Taylor,
\[
{(dd^c u_{\lambda,\epsilon})}^2=dd^c G_{\lambda,\epsilon}^+ \wedge dd^c G_{\lambda,\epsilon}^{-}.
\]
Now for a $z\in \mathbb{C}^2 \setminus K_\lambda^{\pm}$ ,
there exists a small neighborhood $\Omega_{z}\subset \mathbb{C}^2\setminus K_\lambda^{\pm}$ of $z$ such that
${(dd^c u_{\lambda,\epsilon})}^2=0$ on $\Omega_z$ for sufficiently small $\epsilon$. It follows that supp${((dd^c u_\lambda))}^2 \subset K_\lambda$.


\medskip

Since $G_\lambda^{\pm}=L^{\ast}_{{K_\lambda}^{\pm}} \leq L^{\ast}_{K_\lambda}$, we have $u_\lambda\leq L^{\ast}_{K_\lambda}$. Further note that $L^{\ast}_{K_\lambda} \leq
L_{K_\lambda}\leq 0=u_\lambda$ almost every where on $K_\lambda$ with respect to the measure ${(dd^c u_\lambda )}^2$. This is because the set
$\{L_{K_\lambda}^* > L_{K_\lambda}\}$ is pluripolar and consequently has measure zero with respect to ${(dd^c u_\lambda)}^2$. Therefore $L^{\ast}_{K_\lambda}\leq u_\lambda$ in
$\mathbb{C}^2$. Finally, $L_{K_\lambda}$ is continuous and thus $L^{\ast}_{K_\lambda}=L_{K_\lambda}=\max \{G_\lambda^+, G_\lambda^-\}$.

For a non-pluripolar bounded set $E$ in $\mathbb{C}^2$ the complex equilibrium measure is $\mu_E={(dd^c L^{\ast}_E)^2}$. Again by
Bedford--Taylor, $\mu_{K_\lambda}=
\lim_{\epsilon \ra 0}{(dd^c \max\{G_\lambda^+, G_\lambda^-,\epsilon\})}^2$ which when combined with
$$
\mu_\lambda=\mu_\lambda^+ \wedge \mu_\lambda^-= \lim_{\epsilon\ra 0}dd^c G_{\lambda,\epsilon}^+ \wedge dd^c G_{\lambda,\epsilon}^-
$$
and
$$
{(dd^c \max \{G_\lambda^+, G_\lambda^-,\epsilon\})}^2=dd^c G_{\lambda,\epsilon}^+ \wedge dd^c G_{\lambda,\epsilon}^-
$$
shows that $\mu_\lambda$ is the equilibrium measure of $K_\lambda$. Since supp$(\mu_\lambda^{\pm})=J_\lambda^{\pm}$, we have supp$(\mu_\lambda) \subset J_\lambda$.


\subsection{Proof of Theorem \ref{thm1}}

Let $\cal L_y$ be the subclass of $\cal L$ consisting of all those functions $v$ for which there exists $R > 0$ such that
\[
v(x, y) - \log \vert y \vert
\]
is a bounded pluriharmonic function on $V_R^+$.

\medskip

Fix $\lambda\in M$ and let $\omega= 1/4 \;dd^c  \log (1 + \Vert z \Vert^2)$. For a $(1, 1)$ test form $\varphi$ on $\mbb C^2$, it follows that  there exists a $C >0$ such that
\[
-C \Vert
\varphi \Vert \omega \leq \varphi \le C \Vert \varphi \Vert \omega
\]
by the positivity of $\om$.

\medskip

{\it Step 1:} $S_{\la}$ is nonempty.\\

Note that
\begin{eqnarray}
\frac{1}{d^n}\left |
\int_{\mathbb{C}^2}(H_\lambda^{+n})^{\ast}(\psi T)\wedge \varphi
\right|
&\lesssim &\frac{\Vert \varphi \Vert}{d^n}\int_{\mathbb{C}^2}(H_\lambda^{+n})^{\ast}(\psi T)\wedge dd^c \log (1 + \Vert z \Vert^2) \nonumber \\
& \lesssim & \frac{\Vert \varphi \Vert}{d^n}\int_{\mathbb{C}^2}dd^c(\psi T)\wedge \log (1 + \Vert (H_{\la}^{+n})^{-1}(z) \Vert ).\label{13}
\end{eqnarray}

\medskip

Direct calculations show that
\[
\frac{1}{d^n}\log^+ \| (H_\lambda^{+n})^{-1}(z) \| \leq \log^+|z|+C \label{14}
\]
for some $C>0$, for all $n\geq 1$, $\lambda\in M$ and
\begin{equation}
\log (1 + \Vert z \Vert^2) \leq 2 \log^+|z|+2\log 2.\label{15}
\end{equation}
It follows that
\begin{equation*}
0 \le \frac{1}{d^n} \log \left( 1 + \Vert (H_{\la}^{+n})^{-1} \Vert \right) \le 2 \log^+ \vert z \vert + C
\end{equation*}
for some $C>0$, for all $n>0$ and $\lambda\in M$. Hence
\begin{equation}
\frac{1}{d^n}\left | \int_{\mathbb{C}^2} (H_\lambda^{+n})^{\ast}(\psi T)\wedge \varphi \right| \lesssim \Vert \varphi \Vert. \label{16}
\end{equation}

\medskip

The Banach-Alaoglu theorem shows that there is a subsequence $\frac{1}{d^{n_j^\lambda}}(H_{\la}^{+n_j^{\la}})^{\ast}(\psi T)$ that converges in the sense of currents to a positive $(1,1)$ current, say
$\gamma_\lambda$. This shows that $S_\lambda$ is nonempty. It also follows from the above discussion that $\int_{\mbb C^2} \gamma_{\la} \wedge \om < + \infty$.

\medskip

\no {\it Step 2:} Each $\gamma_{\la} \in S_{\la}$ is closed. Further, the support of $\ga_{\la}$ is contained in $K_{\la}^+$.\\

Let $\chi$ be a smooth real $1$-form with compact support in $\mbb C^2$ and let $\psi_1 \ge 0$ be such that $\psi_1 = 1$ in a neighbourhood of ${\rm supp}(\psi)$. Then
\[
\frac{1}{d^{n_j^\lambda}}\int_{\mathbb{C}^2}d \chi \wedge (H_\lambda^{+n_j^{\lambda}})^{\ast}(\psi T) =
\frac{1}{d^{n_j^\lambda}}\int_{\mathbb{C}^2}\chi \circ (H_\lambda^{+n_j^{\la}})^{-1} \wedge d\psi \wedge \psi_1 T.
\]
to obtain which the assumption that ${\rm supp}(\psi) \cap {\rm supp}(dT) = \phi$ is used. By the Cauchy-Schwarz inequality it follows that the term on the right above is dominated by
the square root of
\[
\left(\int_{\mathbb{C}^2} \big( (J \chi \wedge \chi)\circ (H_\lambda^{+n_j^{\la}})^{-1} \big) \wedge \psi_1 T\right)
\left( \int_{\mathbb{C}^2} d\psi \wedge d^c \psi \wedge \psi_1 T \right)
\]
whose absolute value in turn is bounded above by a harmless constant times $d^{n_j^\lambda}$. Here $J$ is the standard $\mbb R$-linear map on $1$-forms satisfying $J(d z_j) = i d \ov
z_j$ for $j = 1, 2$.
Therefore
\[
\left| \frac{1}{d^{n_j^{\la}}}
\int_{\mathbb{C}^2}(\chi \circ (H_{\la}^{+n_j^{\la}})^{-1} \wedge d\psi \wedge \psi_1 T
\right| \lesssim d^{- n_j^{\la} / 2}.
\]
Evidently, the right hand side tends to zero as $j \ra \infty$. This shows that $\gamma_\lambda$ is closed.

\medskip

Let $R>0$ be large enough so that supp$(\psi T)\cap V_R^+=\phi$. Let $z\notin K_\lambda^+$ and $B_z$ a small open ball around it such that
$\overline{B_z} \cap K_\lambda^+=\phi$. By Lemma \ref{le1}, there exists an $N>0$ such that $H_\lambda^{+n}(B_z)\subset V_R^+$ for all $n>N$. Therefore
$B_z \cap \text{supp}(H_\lambda^{+n})^{\ast}(\psi T)= B_z \cap (H_{\la}^{+n})^{-1}(\text{supp}(\psi T))=\phi$ for all $n>N$. Since supp$(\gamma_\lambda)\subset
\overline{\bigcup_{n=N}^\infty \text{supp}(H_\lambda^{+n})^{\ast}(\psi T)})$, we have $z\notin \text{supp}(\gamma_\lambda)$. This implies $\text{supp}(\gamma_\lambda)\subset
K_\lambda^+$. Since $K_\lambda^+\cap V_R^+=\phi$ for all $\lambda\in M$, it also follows that $\text{supp}(\gamma_\lambda)$ does not intersect $\ov{V_R^+}$.

\medskip

\no {\it Step 3:} Each $\ga_{\la}$ is a multiple of $\mu_{\la}^+$.

\medskip

It follows from Proposition 8.3.6 in \cite{MNTU} that $\ga_{\la} = c_{\ga, \la} dd^c U_{\ga, \la}$ for some $c_{\ga, \la} > 0$ and $U_{\ga, \la} \in \cal L_y$. In this
representation, $c_{\ga, \la}$ is unique while $U_{\ga, \la}$ is unique upto additive constants. We impose the following condition on $U_{\gamma,\lambda}$:
\[
\lim_{|y|\rightarrow \infty} (U_{\gamma,\lambda}-\log|y|)=0 \label{17}
\]
and this uniquely determines $U_{\ga, \la}$. It will suffice to show that $U_{\ga, \la} = G_{\la}^+$.

\medskip

Let $\gamma_{\lambda,x}$ denote the restriction of $\gamma_\lambda$ to the plane $\{(x,y):y\in \mathbb{C}\}$. Since $U_{\ga, \la} \in \cal L_y$, it follows that

\begin{equation}
\int_{\mathbb{C}}\gamma_{\lambda,x}=2\pi c_{\gamma,\lambda}, \;\; U_{\gamma,\lambda}(x,y)=\frac{1}{2\pi c_{\gamma,\lambda}} \int_{\mathbb{C}}\log |y-\zeta|\gamma_{\lambda,x}(\zeta).
\label{18}
\end{equation}

\medskip

Consider a uniform filtration $V^{\pm}_R, V_R$ for all the maps $H_{\la}$ where $R^d > 2R$ and $\vert p_{j, \la}(y) \vert \ge \vert y \vert^d / 2$ for $\vert y \vert \ge R$. Let $0
\not= a = \sup \vert a_j(\la) \vert < \infty$ (where the supremum is taken over all $1 \le j \le m$ and $\la \in M$) and choose $R_1 > R^d /2$. Define
\[
A = \big \{ (x, y) \in \mbb C^2 : \vert y \vert^d \ge 2(1 + a) \vert x \vert + 2 R_1 \big \}.
\]
Evidently $A \subset \{ \vert y \vert > R \}$. Lemma \ref{le1} shows that for all $\la \in M$, $H_{\la}(x, y) \subset V_R^+$ when $(x, y) \in A \cap V_R^+$. Furthermore for $(x, y) \in
A \cap (\mbb C^2 \setminus V_R^+)$, it follows that
\[
\vert p_{j, \la}(y) - a_j(\la)x \vert \ge \vert y \vert^d / 2 - a \vert x \vert \ge \vert y \vert + R.
\]
This shows that $H_{\la}(A) \subset V_R^+$. By Lemma \ref{le1} again it can be seen that $H_{\la}^{+n}(A) \subset V_R^+$ for all $n \ge 1$ which shows that $A \cap K_{\la}^+ = \phi$ for
all $\la \in M$. Let $C>0$ be such that
\[
C^d \geq \max\{2(1+\lvert a \rvert), 2R_1\}.
\]
If $|y|\geq C(\lvert x \rvert^{1/d}+1)$ then
\begin{equation*}
{|y|}^{d} \geq C^{d}(\lvert x\rvert+1) \geq 2(1+\lvert a \rvert)\lvert x \rvert + 2R_1.
\end{equation*}
which implies that
\[
B= \big\{ (x,y)\in \mathbb{C}^2: |y|\geq C(\lvert x \rvert^{1/d}+1) \big\}\subset A
\]
and hence $K_\lambda^+ \cap B=\phi $. Since $V_R^+ \subset B $ for sufficiently large $R$, by applying Lemma \ref{le1} once again it follows that
\begin{equation}
K_\lambda^+\cap B=\phi \text{ and } \bigcup_{n=0}^\infty (H_{\la}^{+n})^{-1}(B)=\mathbb{C}^2\setminus K_\lambda^+\label{19}
\end{equation}
for all $\lambda\in M$.

\medskip

Set $r=C(|x|^{1/d}+1)$. Since supp$(\gamma_\lambda)\subset K_\lambda^+$ it follows that
\[
\text{supp}(\gamma_{\lambda,x})\subset \{\lvert y \rvert \leq r\}
\]
for all $\lambda\in M$. Since
\[
\lvert y \rvert-r\leq \lvert y-\zeta\rvert\leq \lvert y \rvert+r
\]
for $\lvert y \rvert>r$ and $\lvert \zeta\rvert\leq r$, (\ref{18}) yields
\[
\log(\lvert y \rvert-r) \leq U_{\gamma,\lambda}(x,y) \leq \log(\lvert y \rvert+r)
\]
which implies that
\[
-(r/{\lvert y \rvert})/(1- r/{\lvert y \rvert})\leq U_{\gamma,\lambda}(x,y)- \log \lvert y \rvert \leq  r/{\lvert y \rvert}.
\]
Hence for $\lvert y \rvert > 2r$, we get
\begin{equation}
-2r/{\lvert y \rvert} \leq U_{\gamma,\lambda}(x,y)- \log \lvert y \rvert \leq r/{\lvert y \rvert} \label{20}
\end{equation}
for all $\lambda\in M$.

\medskip

For each $N \ge 1$, let $\gamma_\lambda(N) = d^{N}(H_{\la}^{+N})_{\ast}(\gamma_\lambda)$. Then
\[
\gamma_\lambda(N)=\lim_{j \ra \infty} d^{-n_j + N}\big( H_{\si^N(\la)}^{+(n_j - N)} \big)^{\ast}(\psi T) \in S_{\sigma^N(\lambda)}(\psi T).
\]
Therefore
\[
\ga_{\si^N(\la)} = c_{\gamma,\sigma^N(\lambda)}dd^c U_{\gamma,\sigma^N(\lambda)}
\]
for some  $c_{\gamma,\sigma^N(\lambda)}>0$ and $U_{\gamma,\sigma^N(\lambda)}\in \mathcal{L}_y$ and moreover
\[
c_{\gamma,\lambda}dd^c U_{\gamma,\lambda} = \ga_{\la} = d^{-N} \big( H_{\la}^{+N}
\big)^{\ast} \ga_{\si^N(\la)} = c_{\gamma,\sigma^N(\lambda)}dd^c \big(d^{-N} \big(H_{\la}^{+N} \big)^{\ast} U_{\ga, \si^N(\la)} \big).
\]
Note that both $d^{-N} \big(H_{\la}^{+N} \big)^{\ast} U_{\ga, \si^N(\la)}$ and $U_{\gamma,\sigma^N(\lambda)}$ belong to $\mathcal{L}_y$. It follows that $c_{\ga,
\la} = c_{\ga, \si^N(\la)}$ and $d^{-N} \big(H_{\la}^{+N} \big)^{\ast} U_{\ga, \si^N(\la)}$ and $U_{\gamma,\lambda}$ coincide up to an additive constant which can be shown to be
zero as follows.

\medskip

By the definition of the class $\cal L_y$, there exists a pluriharmonic function $u_{\la, N}$ on some $V_R^+$ such that
\[
U_{\gamma,\sigma^N(\lambda)}(x,y)- \log \lvert y \rvert = u_{\lambda,N} \text{ and } \lim_{\lvert y \rvert \rightarrow \infty}u_{\lambda,N}(x,y)= u_0 \in \mathbb{C}.
\]
Therefore if $(x,y)\in (H_{\la}^{+N})^{-1}(V_R^+)$ and $(x_N^{\la}, y_N^{\la}) = H_{\la}^{+N}(x, y)$ then
\[
d^{-N} \big(H_{\la}^{+N} \big)^{\ast} U_{\ga, \si^N(\la)} (x, y) - d^{-N}\log \lvert y_N^\lambda \rvert = d^{-N}u_{\lambda, N}(x_N^\lambda,y_N^\lambda).
\]
By (2.15), we have that
\[
d^{-N}\log\lvert y_N^\lambda\rvert - \log\lvert y \rvert \rightarrow 0
\]
as $\lvert y \rvert \ra \infty$ which shows that
\[
d^{-N} \big(H_{\la}^{+N} \big)^{\ast} U_{\ga, \si^N(\la)}(x, y) - \log\lvert y \rvert \rightarrow 0
\]
as $\vert y \vert \ra \infty$. But by definition
\[
U_{\gamma,\lambda}(x,y) - \log\lvert y \rvert \rightarrow 0
\]
as $\lvert y \rvert\rightarrow \infty$ and this shows that $ d^{-N} \big(H_{\la}^{+N} \big)^{\ast} U_{\ga, \si^N(\la)} = U_{\gamma,\lambda}$.

\medskip

Let $(x,y)\in \mathbb{C}^2\setminus K_\lambda^+$ and $\epsilon>0$. For a sufficiently large $n$, $(x_n^\lambda, y_n^\lambda)=H_\lambda^{+n}(x,y)$ satisfies $\lvert x_n^\lambda\rvert
\leq \lvert y_n^\lambda\rvert$ and $(x_n^\lambda,y_n^\lambda)\in B$ as defined above. Hence by (\ref{20}) we get
\[
\left| d^{-n} \big(H_{\la}^{+n} \big)^{\ast} U_{\ga, \si^n(\la)} - d^{-n}\log \lvert y_n^\lambda \rvert \right| \leq \frac{2C}{d^n\lvert y _n^\lambda\rvert}({\lvert x_n^\lambda
\rvert}^{1/d}+1)<\epsilon.
\]
On the other hand by using (\ref{9.1}), it follows that
\[
\left| G_\lambda^+(x,y)- d^{-n}\log\lvert y_n^\lambda\rvert\right|<\epsilon
\]
for large $n$. Combining these two inequalities and the fact that $ d^{-n} \big(H_{\la}^{+n} \big)^{\ast} U_{\ga, \si^n(\la)}=U_{\gamma,\lambda}$ for all $n\geq 1$ we get
\[
\left| G_\lambda^+(z)-U_{\gamma,\lambda}(z)\right|<2\epsilon.
\]
Hence $U_{\gamma,\lambda}=G_\lambda^+$ in $\mathbb{C}^2\setminus K_\lambda^+$.

\medskip

The next step is to show that $U_{\gamma,\lambda}=0$ in the interior of $K_\lambda^+$. Since $U_{\gamma,\lambda}=G_\lambda^+$ in $\mathbb{C}^2\setminus K_\lambda^+$, the maximum
principle applied to $U_{\gamma,\lambda}(x,.)$ with $x$ being fixed, gives $U_{\gamma,\lambda}\leq 0$ on $K_\lambda^+$. Suppose that there exists a nonempty
$\Omega\subset\subset K_\lambda^+$ satisfying
$U_{\gamma,\lambda}\leq -t$ in $\Omega$ with $t>0$. Let $R>0$ be so large that $\bigcup_{n=0}^{\infty}H_\lambda^{+n}(\Omega)\subset V_R$ -- this follows from Lemma \ref{le1}. Since
$d^{-n} \big(H_{\la}^{+n} \big)^{\ast} U_{\ga, \si^n(\la)}=U_{\gamma,\lambda}$ for each $n \ge 1$, it follows that
\[
H_\lambda^{+n}(\Omega)\subset \big\{U_{\gamma,\sigma^n(\lambda)}\leq -d^n t\big\}\cap V_R
\]
for each $n \ge 1$. The measure of the last set with $x$ fixed and $\lvert x \rvert\leq R$ can be estimated in this way -- let
\[
Y_x=\big\{ y\in \mathbb{C}:U_{\gamma,\sigma^n(\lambda)}\leq -d^n t\big\}\cap \big\{\lvert y \rvert <R\big\}.
\]
By the definition of capacity
\[
\text{cap}(Y_x)\leq \exp (-d^n t)
\]
and since the Lebesgue measure of $Y_x$, say $m(Y_x)$ is at most $\pi e {\text{cap}(Y_x)}^2$ (by the compactness of $Y_x \subset \mbb C$) we get
\[
m(Y_x)\leq \pi \exp(1-2d^n t).
\]
Now for each $\la\in M$, the Jacobian determinant of $H_\la$ is a constant given by $a_\la= a_1(\la) a_2(\la) \ldots a_m(\la)\neq 0$ and since the correspondence $\la \mapsto a_\la$ is
continuous, an application of Fubini's theorem yields
\[
a^n m(\Omega)\leq \lvert a_{\sigma^{n-1}(\la)}\cdots a_\la\rvert m(\Omega)=m(H_\la^{+n}(\Omega))\leq \int_{\lvert x \rvert\leq R}m(Y_x)dv_x \leq \pi^2 R^2 \exp (1-2d^n t)
\]
where $a=\inf_{\la\in M} \lvert a_\la \rvert $. This is evidently a contradiction for large $n$ if $m(\Omega)>0$.

\medskip

So far it has been shown that $U_{\gamma,\lambda}=G_\lambda^+$ in $\mathbb{C}^2\setminus J_\lambda^+$. By using the continuity of $G_\lambda^+$ and the upper semi-continuity of
$U_{\gamma,\lambda}$, we have that $U_{\gamma,\lambda}\geq G_\lambda^+$ in $\mathbb{C}^2$. Let $\epsilon>0$ and consider the slice $D_\lambda=\{y:G_\lambda^+<\epsilon\}$ in the
$y$-plane for some fixed $x$. Note that $U_{\gamma,\lambda}(x,.)=G_\lambda^+(x,.)=\epsilon$ on the boundary $\partial D$. Hence by the maximum principle $U_{\gamma,\lambda}(x,.)\leq
\epsilon$ in $D_\lambda$. Since $x$ and $\epsilon$ are arbitrary, it follows that $U_{\gamma,\lambda}^+=G_\lambda^+$ in $\mathbb{C}^2$. This implies that
\[
\gamma_\lambda=c_{\gamma,\lambda}\mu_\lambda^+
\]
for any $\gamma_\lambda\in S_\lambda(\psi T)$. This completes the proof of Theorem 1.3.


\subsection{Proof of Proposition 1.4}

Let $\si : M \ra M$ be an arbitrary continuous map and pick a $\ga_{\la} \in S(\psi, T)$. Let $\theta = 1/2 \;dd^c \log (1 + \vert x \vert^2)$ in $\mbb C^2$ (with coordinates $x, y$)
which is a positive closed $(1, 1)$-current depending only on $x$. Then for any test function $\varphi$ on $\mbb C^2$,
\[
\int_{\mathbb{C}^2}\varphi \gamma_\lambda \wedge \theta = c_{\gamma,\lambda}\int_{\mathbb{C}^2}U_{\gamma,\lambda}dd^c \varphi \wedge \theta
= c_{\gamma,\lambda}\int_{\mathbb{C}}\theta \int_{\mathbb{C}}U_{\gamma,\lambda} \Delta_y \varphi
= c_{\gamma,\lambda} \int_{\mathbb{C}}\theta \int_{\mathbb{C}}\varphi \Delta_y U_{\gamma,\lambda}.
\]
Since $y \mapsto U_{\gamma,\lambda}(x,y)$ has logarithmic growth near infinity and $\varphi$ is arbitrary it follows that
\begin{equation}
\int_{\mathbb{C}^2}\gamma_\lambda \wedge \theta = 2\pi c_{\gamma,\lambda}\int_{\mathbb{C}^2}\theta
={(2\pi)}^2c_{\gamma,\lambda}.\label{19}
\end{equation}
Let $R > 0$ be large enough so that $\text{supp}(\psi T) \cap V_R^+ = \phi$ which implies that $\text{supp}(\psi T)$ is contained in the closure of $V_R \cup V_R^-$. Then
\begin{eqnarray*}
\int_{\mathbb{C}^2}\frac{1}{d^{n_j^\lambda}} (H_\lambda^{{+ n_j^\lambda}})^{\ast}(\psi T) \wedge \theta
&=& \frac{1}{d^{n_j^\lambda}}\int_{\mathbb{C}^2}\psi T \wedge \frac{1}{2} (H_\lambda^{{+ n_j^\lambda}})_{\ast}dd^c\log (1+|x|^2)\\
&=& \frac{1}{d^{n_j^\lambda}}\int_{\mathbb{C}^2} (\psi T)\wedge dd^c\left(\frac{1}{2}\log (1+|\pi_1\circ (H_\lambda^{{+ n_j^\lambda}})^{-1}|^2)\right)\\
&=& \frac{1}{d^{n_j^\lambda}} \int_{\overline{V_R\cup V_R^-}}\psi T \wedge dd^c\left(\frac{1}{2}\log (1+|\pi_1\circ (H_\lambda^{+ n_j^\lambda})^{-1}|^2)\right).
\end{eqnarray*}
It is therefore sufficient to study the behavior of $\log (1+|\pi_1\circ (H_\lambda^{+ n_j^\lambda})^{-1}|^2)$. But
\[
\log^+ \vert x \vert \le \log^+ \vert (x, y) \vert \le \log^+ \vert x \vert + R
\]
for $(x, y) \in V_R \cup V_R^-$ and by combining this with
\[
2 \log^+ \vert x \vert \le \log (1 + \vert x \vert^2) \le 2 \log^+ \vert x \vert + \log 2
\]
it follows that the behavior of $(1/2) d^{-n_j^{\la}} \log (1+|\pi_1\circ (H_\lambda^{+ n_j^\lambda})^{-1}|^2)$ as $j \ra \infty$ is similar to that of
$d^{-n_j^{\la}} \log^+ \vert (H_\lambda^{+ n_j^\lambda})^{-1} \vert$.

\medskip

Now suppose that $\si$ is the identity on $M$. In this case, $(H_\lambda^{+ n_j^\lambda})^{-1}$ is just the usual  $n_j^{\la}$--fold iterate of the map $H_{\la}$ and by Proposition 1.1
it follows that
\[
\lim_{j \ra \infty} d^{-n_j^{\la}} \log \Vert (H_\lambda^{+n_j^\lambda})^{-1} \Vert = G_{\la}^-
\]
and hence that
\[
4 \pi^2 c_{\ga, \la} = \int_{\mbb C^2} \ga_{\la} \wedge \theta = \int_{\mathbb{C}^2} \lim_{j \ra \infty} \frac{1}{d^{n_j^\lambda}} (H_\lambda^{{+ n_j^\lambda}})^{\ast}(\psi T) \wedge
\theta = \int_{\mbb C^2} \psi T \wedge \mu_{\la}^-.
\]
The right side is independent of the subsequence used in the construction of $\ga_{\la}$ and hence $S(\psi, T)$ contains a unique element.

\medskip

The other case to consider is when there exists a $\la_0 \in M$ such that $\si^n(\la) \ra \la_0$ for all $\la$. For each $n \ge 1$ let
\[
\ti G_{n, \la}^- = \frac{1}{d^n} \log^+ \Vert (H_{\la}^n)^{-1} \Vert.
\]
Note that $\ti G_{n, \la}^-  \not= G_{n, \la}^-!$ It will suffice to show that $\ti G_{n, \la}^-$ converges uniformly on compact subsets of $\mbb C^2$ to a plurisubharmonic function,
say
$\ti G_{\la}^-$. Let
\[
\ti K_{\la}^- = \big\{ z \in \mbb C^2 : \;\text{the sequence} \;\{ (H_{\la}^{+n})^{-1}(z) \}  \;\text{is bounded} \;\big\}
\]
and let $A \subset \mbb C^2$ be a relatively compact set such that $A \cap \ti K_{\la}^- = \phi$ for all $\la \in M$. The arguments used in Lemma \ref{le1} show that
\[
\mbb C^2 \setminus \ti K_{\la}^- = \bigcup_{n=0}^{\infty} H_{\la}^{+n}(V_R^-)
\]
for a sufficiently large $R > 0$. As Proposition 1.1 it can be shown that $\ti G_{n, \la}^-$ converges to a pluriharmonic function $\ti G_{\la}^-$ on $V_R^-$. Hence for large $m, n$
\begin{equation}
\vert \ti G_{m, \la}^-(p) - \ti G_{n, \la}^-(q) \vert < \ep
\end{equation}
for $p, q \in V_R^-$ that are close enough. Let $n_0$ be such that $(H_{\la_0}^{+n_0})^{-1}(A) \subset V_R^-$ and pick a relatively compact set $S \subset V_R^-$
such that $(H_{\la_0}^{+n_0})^{-1}(A) \subset S$. Pick any $\la$. Since $\si^n(\la) \ra \la_0$ and the maps $H_{\la}^{\pm 1}$ depend continuously on $\la$, it follows that
$H_{\si^n(\la)}^{+n_0}(A) \subset S$. By choosing $m, n$ large enough it is possible to ensure that for all $(x, y) \in A$, $(H_{\si^{m - n_0}(\la)}^{+n_0})^{-1}(x, y)$ and
$(H_{\si^{n - n_0}(\la)}^{+n_0})^{-1}(x, y)$ are as close to each other as desired. By writing
\[
\ti G_{n, \la}^-(x, y) = \frac{1}{d^{n_0}} \frac{1}{d^{n - n_0}} \log^+ \Vert H_{\la}^{-1} \circ \cdots \circ H_{\si^{n - n_0 + 1}(\la)}^{-1} \circ
(H_{\si^{n - n_0}(\la)}^{+n_0})^{-1}(x, y) \Vert
\]
and using (2.25) it follows that $\ti G_{n, \la}^-$ converges uniformly to a pluriharmonic function on $A$. To conclude that this convergence is actually uniform on compact sets of
$\mbb C^2$,
it suffices to appeal to the arguments used in Proposition 1.1.


\subsection{Proof of Theorem 1.5}

Recall that now $\si$ is the identity and
\begin{equation}
H(\la, x, y) = (\la, H_{\la}(x, y)).
\end{equation}
Thus the second coordinate of the $n$-fold iterate of $H$ is simply the $n$-fold iterate $H_{\la} \circ
H_{\la} \circ \cdots \circ H_{\la}(x, y)$. For simplicity, this will be denoted by $H_{\la}^n$ as opposed to $H_{\la}^{+n}$ since they both represent the same map. Consider the disc
$\mathcal{D}= \{x=0,\vert y \vert < R\} \subset \mbb C^2$ and let $0 \le \psi \le 1$ be a test function with compact support in $\cal D$ such that $\psi \equiv 1$ in a $\cal D_r = \{x =
0, \vert y \vert < r\}$ where $r < R$. Let $\imath :\mathcal{D}\ra V_R$ be the inclusion map. Let $L$  be a smooth subharmonic function of $\vert y \vert$ on the
$y$-plane such that $L(y)=\log \vert y \vert$ for $\vert y \vert > R$ and define $\Theta= (1/2\pi) dd^c L$. If $\pi_y$ be the projection from $\mbb C^2$ onto the $y$-axis, let
\[
\alpha_{n,\la}= (\pi_y \circ H_{\la}^n \circ \imath)^{\ast} \Theta \big|_{\cal D_r}.
\]
By using Theorem 1.3 and Proposition 1.4 along with Lemma 4.1 in \cite{BS3} it follows that if $j_n$ be a sequence such that $1 \le j_n < n$ and both $j_n, n - j_n \ra \infty$ then
\[
\lim_{n \ra \infty} d^{-n} (H_{\la}^{j_n})_{\ast} \alpha_{n, \la} = c_{\la} \mu_{\la}
\]
where $c_{\la} = \int \psi [\cal D] \wedge \mu_{\la}^+$. Note that $c_{\la} = 1$ for all $\la \in M$ since $\mu_{\la}^+ = (1/2\pi) dd^c G_{\la}^+$ and $G_{\la}^+ = \log \vert y \vert$
plus a harmonic term in $V_R^+$. As a consequence, if $\si_{n, \la} = d^{-n} \al_{n, \la}$ and
\begin{equation*}
\mu_{n,\la}=\frac{1}{n}\sum_{j=0}^{n-1} (H_\la^j)_{\ast}(\si_{n,\la}), \label{21}
\end{equation*}
then Lemma 4.2 in \cite{BS3} shows that
\begin{equation*}
\lim_{n\ra \infty}\mu_{n,\la}=\mu_\la \label{22}
\end{equation*}
for each $\la\in M$.

\medskip

For an arbitrary compactly supported probability measure $\mu'$ on $M$ and for each $n \ge 0$ let $\mu_n$ and $\sigma_n$ be defined by the recipe in (1.2), i.e., for a test function
$\phi$,
\[
\langle \mu_n, \phi \rangle = \int_M \left ( \int_{\{ \la \} \times \mbb C^2} \phi \; \mu_{n, \la}  \right) \mu'(\la) \;\; \text{and} \;\; \langle \si_n, \phi \rangle  = \int_M \left (
\int_{\{ \la \} \times \mbb C^2} \phi \; \si_{n, \la} \right) \mu'(\la).
\]

\medskip

We claim that
\[
\lim_{n\ra \infty} \mu_n=\mu \; \text{and} \; \mu_n=\frac{1}{n}\sum_{j=0}^{n-1}H_*^j\sigma_n.
\]
where $H$ is as in (2.26). For the first claim, note that for all test functions $\phi$
\begin{eqnarray}
 \lim_{n\ra \infty}\langle \mu_n,\phi\rangle &=& \lim_{n\ra \infty}\int_M \langle \mu_{n,\la},\phi\rangle \mu'(\la) = \int_M \lim_{n\ra \infty}\langle \mu_{n,\la},\phi\rangle
\mu'(\la)\\ \notag
                                             &=& \int_M \langle \mu_\la,\phi\rangle \mu'(\la) = \langle\mu,\phi\rangle \label{23}
\end{eqnarray}
where the second equality follows by the dominated convergence theorem. For the second claim, note that
\[
\left \langle \frac{1}{n}\sum_{j-0}^{n-1}H^j_*\sigma_n,\phi \right \rangle = \int_M \left \langle \frac{1}{n}\sum_{j=0}^{n-1}{H_\la^j}_*(\sigma_{n,\la}),\phi \right \rangle \mu'(\la)
                                                                            = \int_M \langle\mu_{n,\la},\phi \rangle \mu'(\la) = \langle \mu_n,\phi\rangle.
\]
Hence by (2.27), we get
\[
\lim_{n\ra\infty}\frac{1}{n}\sum_{j-0}^{n-1}H^j_*\sigma_n=\mu.
\]

\medskip

Note that the support of $\mu$ is contained in $\text{supp}(\mu') \times V_R$. Let $\mathcal{P}$ be a partition of $M\times V_R$ so that the $\mu$-measure of the boundary of  each
element of $\mathcal{P}$ is zero and each of its elements has diameter
less than $\epsilon$. This choice is possible by Lemma 8.5 in \cite{W}. For each $n\geq 0$, define the $d_n$ metric on $M\times V_R$ by
$$
d_n(p,q)=\max_{0\leq i \leq {n-1}}d(H^i(p),H^i(q))
$$
where $d$ is the product metric on $M\times V_R$. Note that each element $\mathcal{B}$ of
$\bigvee_{j=0}^{n-1}H^{-j}\mathcal{P}$ is inside an $\epsilon$-ball in the $d_n$ metric and if $\cal B_{\la} = (\cal B \times \{ \la \}) \cap V_R$, then the $\sigma_n$ measure of $\cal
B$ is given by
\begin{equation*}
 \sigma_n(\mathcal{B}) = \int_M {\sigma_{n,\la}(\mathcal{B}_\la)}\mu'(\la)
 = \int_M \left ( d^{-n}\int_{{\mathcal{B}_\la}\cap \mathcal{D}} {H_\la^{n}}^{\ast} \Theta \right) \mu'(\la)
 = \int_M \left ( d^{-n}\int_{H_\la^n({\mathcal{B}_\la}\cap \mathcal{D})} \Theta \right ) \mu'(\la).
\end{equation*}
Therefore, since $\Theta$ is bounded above on $\mathbb{C}^2$, there exists $C>0$ such that
\begin{equation}
 \sigma_n(\mathcal{B})\leq  C \; d^{-n}\int_M \text{Area}( H_\la^n(\mathcal{B}_\la\cap \mathcal{D})) \mu'(\la)
 = C \; d^{-n} \text{Area} \left( H^n ( \mathcal{B}\cap (\mathcal{D}\times M)) \right).
 \end{equation}

\medskip

For a continuous map $f : X \ra X$ on a compact set $X$ endowed with an invariant probability measure $m$, let
\begin{eqnarray*}
 {\cal H}_m(\mathcal{A}) &=& -{\Sigma_{i=1}^k m({A}_i) \log m({A}_i)},\\
 h(\cal A, f) &=& \lim_{n \ra \infty} \frac{1}{n} \cal H_m \left( \bigvee_{j=0}^{n-1} f^{-j} \cal A \right)
\end{eqnarray*}
for a partition $\mathcal{A}=\{ {A}_1, A_2, \ldots, {A}_k\}$ of $X$. By definition, the measure theoretic entropy of $f$ with respect to $m$ is $h_m(f) = \sup_{\cal A} h(\cal A, f)$. We
will work with $X = \text{supp}(\mu) \subset M \times V_R$ and view $H$ as a self map of $X$.

\medskip

If $v^0(H,n,\epsilon)$ denotes the supremum of the areas of $H^n(\mathcal{B}\cap (\mathcal{D}\times M))$ over all $\epsilon$-balls $\mathcal{B}$, then
\begin{equation*}
\cal H_{\sigma_n}\left( \bigvee_{j=0}^{n-1} H^{-j}\mathcal{P} \right) \geq -{\log C}+n \log d -\log v^0(H,n,\epsilon)
\end{equation*}
by (2.28). By appealing to Misiurewicz's variational principle as explained in \cite{BS3} we get a lower bound for the measure theoretic entropy $h_\mu$ of $H$ with respect to the
measure $\mu$ as follows:
\begin{equation*}
h_\mu \geq \limsup_{n\ra \infty} \frac{1}{n}(-{\log C}+n \log d -\log v^0(H,n,\epsilon)) \geq \log d -\limsup_{n\ra \infty} v^0(H,n,\epsilon).
\end{equation*}
By Yomdin's result (\cite{Y}), it follows that $\lim_{\epsilon\ra 0} v^0(H,n,\epsilon)=0$. Thus $h_\mu\geq \log d$. To conclude, note that $\text{supp}(\mu) \subset \cal J \subset
M\times V_R$ and therefore by the variational principle the topological entropy of $H$ on $\cal J$ is also at least $\log d$.


\section{Fibered families of holomorphic endomorphisms of $\mbb P^k$}

\subsection{Proof of Proposition 1.6}: By (1.4) there exists a $C>1$ such that
\[
C^{-1} \Vert F_{\sigma^{n-1}(\la)} \circ \ldots \circ F_\la(x) \Vert^d \leq \Vert F_{\sigma^{n}(\la)} \circ \ldots \circ F_\la(x)\Vert \leq C \Vert F_{\sigma^{n-1}(\la)}\circ \ldots
\circ F_\la(x)\Vert^d
\]
for all $\la\in M$, $x\in \mathbb{C}^{k+1}$ and for all $n\geq 1$. As a result,
\begin{equation}
\vert G_{n+1,\la}(x)-G_{n,\la}(x) \vert \leq \log C/d^{n+1}. \label{24}
\end{equation}
Hence for each  $\la\in M$,  as $n\ra\infty$, $G_{n,\la}$ converges uniformly to a continuous plurisubharmonic function $G_\la$ on $\mathbb{C}^{k+1}$. If $G_n(\la, x) = G_{n, \la}(x)$,
then (3.1) shows that $G_n \ra G$ uniformly on $M \times (\mbb C^{k+1} \setminus \{0\})$.

\medskip

Furthermore, for $\la\in M$ and $c\in \mathbb{C}^*$
\begin{eqnarray}
G_\la(cx)&=&\lim_{n\ra \infty}\frac{1}{d^n}\log \Vert F_{\sigma^{n-1}(\la)}\circ \ldots \circ F_\la(cx)\Vert \nonumber\\
&=&\lim_{n\ra\infty}\left( \frac{1}{d^n}\log {|c|}^{d^n}+\frac{1}{d^n}\log
\Vert F_{\sigma^{n-1}(\la)}\circ \ldots \circ F_\la(z) \Vert \right) = \log \vert c \vert + G_\la(x).
\end{eqnarray}
We also note that
\[
G_{\sigma(\la)}\circ F_\la(x) = d \lim_{n\ra \infty }\frac{1}{d^{n+1}}\log \Vert F_{\sigma^{n}(\la)}\circ \ldots \circ F_\la(x) \Vert = d G_\la(x)
\]
for each $\la\in M$.

\medskip

Finally, pick $x_0 \in \cal A_{\la_0}$ which by definition means that $\Vert F_{\si^{n-1}(\la_0)} \circ \ldots \circ F_{\si(\la_0)} \circ F_{\la_0}(x_0) \Vert \le \ep$ for all large $n$.
Therefore $G_{n, \la_0}(x_0) \le d^{-n} \log \ep$ and hence $G_{\la_0}(x_0) \le 0$. Suppose that $G_{\la_0}(x_0) = 0$. To obtain a contradiction, note that there exists a uniform $r >
0$ such that
\[
\Vert F_{\la}(x) \Vert \le (1/2) \Vert x \Vert
\]
for all $\la \in M$ and $\Vert x \Vert \le r$. This shows that the ball $B_r$ around the origin is contained in all the basins $\cal A_{\la}$. Now $G_{\la}(0) = -\infty$ for all $\la
\in M$ and since $G_{\la_n} \ra G_{\la}$ locally uniformly on $\mbb C^{k+1} \setminus \{0\}$ as $\la_n\ra \la$ in $M$, it follows that there exists a large $C > 0$ such that
\[
\sup_{(\la, x) \in M \times \pa B_r} G_{\la}(x) \le - C.
\]
By the maximum principle it follows that for all $\la \in M$
\begin{equation}
G_{\la}(x) \le -C
\end{equation}
on $B_r$. On the other hand, the invariance property $G_{\si(\la)} \circ F_{\la} = d G_{\la}$ implies that
\[
d^n G_{\la} = G_{\si^n(\la)} \circ F_{\si^{n-1}(\la)} \circ \ldots \circ F_{\la}
\]
for all $n \ge 1$. Since we are assuming that $G_{\la_0}(x_0) = 0$ it follows that
\[
G_{\si^n(\la_0)} \circ F_{\si^{n-1}(\la_0)} \circ \ldots \circ F_{\la_0}(x_0) = 0
\]
for all $n \ge 1$ as well. But $F_{\si^{n-1}(\la_0)} \circ \ldots \circ F_{\si(\la_0)} \circ F_{\la_0}(x_0)$ is eventually contained in $B_r$ for large $n$ and this means that
\[
0 = G_{\si^n(\la_0)} \circ F_{\si^{n-1}(\la_0)} \circ \ldots \circ F_{\la_0}(x_0) \le -C
\]
by (3.3). This is a contradiction. Thus $\cal A_{\la} \subset \{G_{\la} < 0\}$ for all $\la \in M$.

\medskip

For the other inclusion, let $x \in \mathbb{C}^{k+1}$ be such that $G_\la(x)=-a$ for some $a>0$. This implies that for a given $\epsilon>0$ there exist $j_0$ such that
\[
-(a+\epsilon)< \frac{1}{d^j}\log \Vert F_{\sigma^{j-1}(\la)}\circ \ldots \circ F_\la(x)\Vert < -a+\epsilon
\]
for all $j\geq j_0$. This shows that $ F_{\sigma^{j-1}(\la)}\circ \ldots \circ F_\la(x) \ra 0$ as $j\ra \infty$. Hence $x\in \mathcal{A}_\la$.


\subsection{Proof of Proposition 1.7}: Recall that $\Om_{\la} = \pi(\cal H_{\la})$ where $\cal H_{\la} \subset \mbb C^{k+1}$ is the collection of those points in a neighborhood of
which $G_{\la}$ is pluriharmonic and $\Om'_{\la} \subset \mbb P^k$ consists of those points $z \in \mbb P^k$ in a neighborhood of which the sequence
\[
\{ f_{\si^{n-1}(\la)} \circ \ldots \circ f_{\si(\la)} \circ f_{\la} \}_{n \ge 1}
\]
is normal, i.e., $\Om'_{\la}$ is the Fatou set. Once it is known that the basin $\cal A_{\la} = \{ G_{\la} < 0 \}$, showing
that $\Om_{\la} = \Om'_{\la}$ and that each $\Om_{\la}$ is in fact pseudoconvex and Kobayashi hyperbolic follows in much the same way as in \cite{U}. Here are the main points in the proof:

\medskip

\no {\it Step 1:} For each $\la \in M$, a point $p \in \Om_{\la}$ if and only if there exists a neighborhood $U_{\la, p}$ of $p$ and a holomorphic section $s_{\la} : U_{\la, p} \ra \mbb
C^{k+1}$ such that $s_{\la}(U_{\la, p}) \subset \pa \cal A_{\la}$. The choice of such a section $s_{\la}$ is unique upto a constant with modulus $1$.

\medskip

Suppose that $p\in \Omega_\lambda$. Let $U_{\lambda,p}$ be an open ball with center at $p$ that lies in a single coordinate chart with respect to
the standard coordinate system of $\mathbb{P}^k$. Then $\pi^{-1}(U_{\lambda,p})$ can be identified with $\mathbb{C}^{\ast} \times U_{\lambda,p}$ in canonical way and each point of
$\pi^{-1}(U_{\lambda,p})$ can be written as $(c,z)$. On $\pi^{-1}(U_{\lambda,p})$, the function $G_{\la}$ has the form
\begin{equation}
G_\la(c,z)=\log|c|+\gamma_\la(z)
\end{equation}
by (3.2). Assume that there is a section $s_\lambda$ such that
$s_\la(U_{\la,p})\subset \partial \mathcal{A}_\lambda$. Note that $s_\lambda(z)=(\sigma_\la(z),z)$ in $U_{\la,p}$ where $\sigma_\la$ is a non--vanishing holomorphic function on
$U_{\la,p}$. By Proposition 1.6, $G_\lambda\circ s_\la=0$ on $U_{\la,p}$. Thus
\[
0=G_\lambda\circ s_\lambda(z)=\log|\sigma_\lambda(z)|+\gamma_\lambda(z).
\]
Thus $\gamma_\lambda(z)=-\log \vert \sigma_\lambda(z)\vert$ is pluriharmonic on $U_{\lambda,p}$ and consequently $G_\lambda$ is pluriharmonic on
$\pi^{-1}(U_{\lambda,p})$ by (3.4). On the other hand
suppose that $\gamma_\la$ is pluriharmonic. Then there exists a conjugate function $\gamma_\la^{\ast}$ on $U_{\la,p}$
such that $\gamma_\la+i\gamma_\la^{\ast}$ is holomorphic. Define $\sigma_\la(z)=\exp (-\gamma_\la(z)-i\gamma_\la^{\ast}(z))$ and $s_\la(z)=(\sigma_\la(z),z)$.
Then $G_\la(s_\la(z))=\log |\sigma_\la(z)|+\gamma_\la(z)=0$ which shows that $s_\la(U_{\la,p})\subset \partial \mathcal{A}_\la$.

\medskip

\no {\it Step 2:} $\Om_{\la} = \Om'_{\la}$ for each $\la \in M$.

\medskip

Let $p\in \Omega_\la'$ and suppose that $U_{\la,p}$ is a neighborhood of $p$ on which there is a subsequence of
\[
\{f_{\sigma^{j-1}(\la)}\circ \ldots \circ f_\la\}_{j\geq 1}
\]
which is  uniformly convergent. Without loss of generality we may assume that
\[
g_\la = \lim_{j\ra\infty} f_{\sigma^{j-1}(\la)}\circ \ldots \circ f_\la
\]
on $U_{\la, p}$. By rotating the homogeneous coordinates $[x_0:x_1: \ldots : x_k]$ on $\mathbb{P}^k$, we may assume that
$g_\la(p)$ avoids the hyperplane at infinity $H = \big\{x_0=0\big\}$ and that $g_\lambda(p)$ is of the form $[1:g_1: \ldots : g_k]$.
Now choose an $\epsilon$ neighborhood
\[
N_\epsilon=\big\{\vert x_0 \vert < \epsilon {\big({\vert x_0 \vert}^2+ \ldots +{\vert x_k \vert}^2\big)}^{1/2} \big\}
\]
of $\pi^{-1}(H)$ in $\mathbb{C}^{k+1}\setminus \big\{0\big\}$ so that
\[
1>\epsilon {\big(1+{\vert g_1 \vert}^2+ \ldots +{ \vert g_k \vert}^2\big)}^{1/2}.
\]
Clearly $g_\lambda(p)\notin \pi(N_\epsilon)$. Shrink $U_{\la,p}$ if needed so that
\[
f_{\sigma^{j - 1}(\la)}\circ \ldots \circ f_\la (U_{\la,p})
\]
is uniformly separated from $\pi(N_\epsilon)$ for sufficiently large $l$. Define
\[
s_\la(z)=
\begin{cases}
\log  \Vert z \Vert & ;\text{ if } z\in N_\epsilon, \\
\log(\vert z_0 \vert / \vert \epsilon \vert ) & ;\text{ if } z\in \mathbb{C}^{k+1}\setminus (N_\epsilon \cup \{0\})
\end{cases}
\]

\no Note that $0\leq s(z)-\log \Vert z \Vert \leq \log(1/\epsilon)$ which implies that
\[
d^{-{j}}s_\lambda (f_{\sigma^{j-1}(\la)}\circ \ldots \circ f_\la(z))
\]
converges uniformly to the Green function $G_\la$ as $j\ra \infty$ on $\mathbb{C}^{k+1}$. Further if $z\in \pi^{-1}(U_{\la,p})$, then
\[
F_{\sigma^{j-1}(\la)}\circ \ldots \circ F_\la (z)\in \mathbb{C}^{k+1}\setminus (N_\epsilon \cup \{0\}).
\]
This shows that $d^{-{j}}s_\lambda(f_{\sigma^{j-1}(\la)}\circ \ldots \circ f_\la(z))$ is pluriharmonic in  $\pi^{-1}(U_{\la,p})$ and as a consequence the limit function $G_\la$ is also
pluriharmonic in $ \pi^{-1}(U_{\la,p})$. Thus $p\in \Omega_\la$.

\medskip

Now pick a point $p\in \Omega_\lambda$. Choose a neighborhood $U_{\lambda,p}$ of $p$ and a section $s_\lambda: U_{\lambda,p}\ra \mathbb{C}^{k+1}$ as in Step 1.
Since $F_{\la} : \cal A_{\la} \ra \cal A_{\si(\la)}$ is a proper map for each $\la$, it follows that
\[
(F_{\sigma^{j-1}(\lambda)}\circ \ldots \circ F_{\sigma(\lambda)}\circ
F_\lambda)(s_\lambda(U_{\lambda, p}))\subset \partial \mathcal{A}_{\sigma^j(\lambda)}.
\]
It was noted earlier that there exists a $R > 0$ such that $\Vert F_{\la}(x) \Vert \ge 2 \Vert x \Vert$ for all $\la$ and $\Vert x \Vert \ge R$. This shows that
$\mathcal{A}_\lambda\subset {B}_R$ for all $\lambda\in M$, which in turn implies that the sequence
\[
\big\{(F_{\sigma^{j-1}(\lambda)}\circ \ldots \circ F_{\sigma(\lambda)}\circ F_\lambda)\circ s_\lambda\big\}_{j\geq 0}
\]
is uniformly bounded on $U_{\la, p}$. We may assume that it converges and let $g_\lambda:U_{\lambda,p} \ra \mathbb{C}^{k+1}$ be its limit function.
Then $g_\lambda(U_{\lambda,p})\subset \mathbb{C}^{k+1}\setminus \{0\}$ since all the boundaries $\pa \cal A_{\la}$ are at a uniform distance away from the origin; indeed,
recall that there exists a uniform $r > 0$ such that the ball ${B}_r \subset \mathcal{A}_\lambda$ for all $\lambda\in M$. Thus $\pi \circ g_\lambda$ is
well defined and the sequence
$\big\{f_{\sigma^{j-1}(\lambda)}\circ \ldots \circ f_{\sigma(\lambda)}\circ f_\lambda\big\}_{j\geq 0}$ converges to $\pi\circ g_\lambda$ uniformly on compact sets.
Thus $\big\{f_{\sigma^{j-1}(\lambda)}\circ \ldots \circ f_{\sigma(\lambda)}\circ f_\lambda \big\}_{j\geq 0}$ is a normal family in $U_{\lambda,p}$. Hence $p\in \Omega_{\lambda}'$.

\medskip

\no {\it Step 3:} Each $\Om_{\la}$ is pseudoconvex and Kobayashi hyperbolic.

\medskip

That $\Om_{\la}$ is pseudoconvex follows exactly as in Lemma 2.4 of \cite{U}. To show that $\Omega_\lambda$ is Kobayashi hyperbolic, it suffices to prove that each component $U$ of
$\Omega_\lambda$ is Kobayashi hyperbolic. For a point $p$ in $U$ choose $U_{\lambda,p}$ and $s_\lambda$ as in Step $1$. Then $s_\lambda$ can be analytically continued to $U$. This
analytic continuation of $s_\lambda$ gives a holomorphic map $\tilde{s}_{\lambda}: \widetilde{U}\ra \mathbb{C}^{k+1}$ satisfying $\pi\circ \tilde{s}_{\lambda}=p$ where $\widetilde{U}$
is
a covering of $U$ and $p: \widetilde{U}\ra U$ is the corresponding covering map. Note that there exists a uniform $R>0$ such that $\lVert F_\lambda(z)\rVert \geq 2 \lVert z \rVert$ for
all $\lambda\in M$ and for all $z\in \mathbb{C}^{k+1}$  with $\lVert z \rVert \geq R$. Thus $\mathcal{A}_\lambda \subset B(0,R)$ and  $\tilde{s}_{\lambda}(\widetilde{U})\subset
B(0,2R)$.
Since $\tilde{s}_{\lambda}$ is injective and $B(0,2R)$ is Kobayashi hyperbolic in $\mathbb{C}^{k+1}$, it follows that $\widetilde{U}$ is Kobayashi hyperbolic. Hence $U$ is Kobayashi
hyperbolic.


\end{document}